\documentclass[a4paper,11pt,reqno]{amsart}

\usepackage[utf8]{inputenc}
\usepackage[T1]{fontenc}


\usepackage{paralist}				
\usepackage{letterspace}
\usepackage{ifdraft}
\usepackage{cite}
\usepackage{microtype}
\usepackage{tikz,tikz-cd}
\usepackage{flafter}
\usepackage{hyperref}


\usepackage{amsmath}
\usepackage{amssymb}
\usepackage{amsthm}				
\usepackage{mathtools}				
\usepackage{mathrsfs}				
\usepackage{dsfont}				


\newcommand{\mf}{\mathfrak}
\newcommand{\mb}{\mathbb}
\newcommand{\mc}{\mathcal}
\newcommand{\msc}{\mathscr}

\newcommand{\IN}{\mathbb{N}}
\newcommand{\IZ}{\mathbb{Z}}
\newcommand{\IR}{\mathbb{R}}

\newcommand{\ga}{\alpha}
\newcommand{\gb}{\beta}
\newcommand{\gc}{\gamma}
\newcommand{\gC}{\Gamma}

\newcommand{\gep}{\varepsilon}

\newcommand{\gp}{\varphi}

\newcommand{\bP}{\mathbf{P}}
\newcommand{\bE}{\mathbf{E}}
\newcommand{\Var}{\mathbf{Var}}

\newcommand{\ssq}{\subseteq}

\newcommand{\Poi}{\mathsf{Poi}}
\newcommand{\Exp}{\mathsf{Exp}}
\newcommand{\Ber}{\mathsf{Ber}}
\newcommand{\dto}{\stackrel{\mathscr{D}}{\longrightarrow}}
\newcommand{\Di}{\,\textnormal{d}}
\newcommand{\Beta}{\operatorname{B}}

\newcommand{\dist}{\operatorname{dist}}
\newcommand{\clos}{\operatorname{clos}}
\newcommand{\rt}{\textnormal{root}}

\newcommand{\Cut}{\mathsf{Cut}}
\newcommand{\perc}{\operatorname{perc}}

\newcommand{\cater}{\textnormal{CP}}
\newcommand{\Star}{\textnormal{ST}}
\newcommand{\binary}{\textnormal{CBT}}


\newtheorem{lemma}{Lemma}
\newtheorem{cor}[lemma]{Corollary}
\newtheorem{prop}[lemma]{Proposition}
\newtheorem{thm}[lemma]{Theorem}
\theoremstyle{remark}
\newtheorem{rem}[lemma]{Remark}
\newtheorem{ex}[lemma]{Example}
\theoremstyle{definition}
\newtheorem{defn}[lemma]{Definition}

\title{A Modification of the Random Cutting Model}
\author{Fabian Burghart}
\address{Department of Mathematics, Uppsala University, S-751 06 Uppsala, Sweden}
\email{fabian.burghart@math.uu.se}

\keywords{Random cutting model; Random separation of graphs; Percolation}
\subjclass[2020]{60C05 (Primary); 05C30 (Secondary)} 

\begin{document}

\begin{abstract}
  We propose a modification to the random destruction of graphs: 
  Given a finite network with a distinguished set of sources and targets, remove (cut) vertices at random, discarding components that do not contain a source node. We investigate the number of cuts required until all targets are removed, and the size of the remaining graph. This model interpolates between the random cutting model going back to Meir and Moon \cite{MeirMoon1970} and site percolation. We prove several general results, including that the size of the remaining graph is a tight family of random variables for compatible sequences of expander-type graphs, and determine limiting distributions for binary caterpillar trees and complete binary trees. 
\end{abstract}

\maketitle

\section{Introduction}

Investigating the behaviour of trees when randomly removing vertices was first done by Meir and Moon in \cite{MeirMoon1970}. The process in question starts with a rooted tree, where at every step a uniformly chosen vertex [or edge, but for the purpose of this paper we will focus on vertex-removals] is deleted, and all remaining components that do not include the root vertex are discarded. Since the process naturally stops once the root node has been cut [or isolated], the question of interest is the random number of cuts needed to reach this state. In \cite{MeirMoon1970}, the expected value and the variance of this random variable for a random labelled tree is found. 

Panholzer \cite{panholzer2003non,panholzer2006cutting} used generating functions to determine the asymptotic distribution of the cutting number for non-crossing trees and for certain families of simply generated trees. Janson \cite{janson2006random} generalised these results, obtaining the asymptotic distribution for conditioned Galton-Watson trees, by approximating the cutting number with a sum of independent random variables (see also the alternative proof in \cite{addario2014cutting}). Using a similar strategy, a limit law for complete binary trees is proven in \cite{janson2004random}. This work was later extended by Holmgren to binary search trees \cite{holmgren2010random} and split trees \cite{holmgren2011weakly}.

The random cutting model relates to both the record problem (as was observed in \cite{janson2006random}) and to fragmentation processes, where the genealogy of the procedure gives rise to the so-called Cut-tree which has become both a useful tool in obtaining results on the cutting number as well as an object of independent interest. See e.g. \cite{bertoin2012fires}, \cite{broutin2017cutting}, or \cite{berzunza2017cut}.

In recent years, modifications of the original cutting model have been discussed: Kuba and Panholzer regarded the case of isolating a leaf, a general node, or multiple nodes simultaneously (instead of isolating the root) in \cite{kuba2008isolating2,kuba2008isolating,kuba2880multiple}, and Cai, Holmgren et al. proposed and investigated the $k$-cut model in \cite{cai2019k,cai2019cutting,berzunza2019k}, where a node is only removed after it has been cut for the $k$-th time.

In this paper, a different modification of the cutting model is introduced, where, additionally to one or several root vertices (which we will call sources) a second set of vertices (targets) are given. This allows for defining a stopping time for the cutting procedure on the graph by looking at the first moment when all of the targets have been removed (i.e. the sources have been \emph{separated} from the targets). We can then ask several natural questions, about the number of cuts necessary to separate the two sets, about which vertex that was the last to be removed, and about the size of the remaining graph. The aim of the paper is to investigate how separation interpolates, between the cutting model and site percolation, how these questions relate to each other, and 
what general properties hold for this modification of the cutting model, and finally to give some examples.

\subsection*{Plan of the paper} 
In Section 2, we will fix some notation, define our model in both a continuous and discrete time setting, and present a first example.

Section 3 will contain several basic estimates, and we will formalise the imprecise notion that separation interpolates between the cutting model and site percolation (Propositions~\ref{prop:cutting} and \ref{prop:perc}). Since the latter is commonly defined on a locally finite, infinite graph, this requires the right definition that enables us to approximate such a graph by finite graphs, all while respecting the choice of sources and targets, Definition~\ref{defn:exhaust}. 

Section 4 begins by determining the probability that a fixed subgraph occurs at some point in the cutting procedure, which is then used to obtain the probability for this subgraph to be the remaining part at separation. This leads to Theorem~\ref{thm:tight}, which could be regarded as the main result of the paper and gives sufficient conditions for the size of the graph at separation to be a tight sequence of random variables when the graph approximates a locally finite infinite graph in the sense of Definition~\ref{defn:exhaust}. 

In Section 5, our scope will focus on rooted trees, since their recursive nature can be used to simplify many of the arguments and calculations. Here, we will investigate a relevant polynomial that already came up (in slightly different form) in the works of Devroye et al., \cite{devroye2011note, 2013transversals}.

Section 6 will contain three examples of rooted trees: Binary caterpillars, star-shaped graphs, and complete binary trees. These examples are selected to illustrate the techniques (and limitations thereof) of the earlier Sections. Finally, in Section 7 we will give a brief compilation of further research questions.

\section{Cutting procedures}

\subsection*{Some notation}
We will always use $G=(V(G),E(G))$ to denote a graph, consisting of its vertex and edge set, but will shorten the notation to $V=V(G)$ and $E=E(G)$ if there is no ambiguity from the context. If the graph in question is a tree, we prefer the symbol $\mc T$ over $G$. Further notation for special graphs will be introduced as required.
Since most subgraphs we will consider are induced and therefore uniquely determined by their vertex set, we will not distinguish between an induced subgraph and its vertex (sub-)set.

If two vertices $v,w$ are neighbours, we also use the notation $v\sim w$. More generally, we write $\dist(v,w)$ for the graph distance between vertices $v,w$. In the case where $A,B\ssq V(G)$ are subsets, $\dist(A,B)$ is to be understood as $\min\{\dist(v,w):v\in A,w\in B\}$.

Given any set $A\ssq V(G)$ and a fixed set $S$ of source nodes, we define the closure of $A$ to be 
\[
 \clos_S(A):=A\cup S \cup \{v\in V(G):v\sim w \text{ for some }w\in A\}.
\]
The (exterior) boundary of $A$ is defined as $\partial_S A:=\clos_S(A)\setminus A$. In other words, the vertices in $\partial_S A$ are precisely the vertices not in $A$ that are in $S$ or neighbour some vertex in $A$. Note that this implies e.g. $\partial_S \emptyset := S$. (Attributing this special role to $S$ in this definition turns out to be useful throughout the paper).

We use the symbols $\gC$ and $\Beta$ to denote the Gamma and Beta function, respectively, and will make use of the identities 
\[
 \Beta(x,y):=\int_0^1 u^{x-1}(1-u)^{y-1}\,\Di u
 =\frac{\gC(x)\gC(y)}{\gC(x+y)}
\]
and
\[
 \Beta(n+1,m+1)=\frac{1}{n+m+1}\binom{n+m}{n}^{-1}=\frac{1}{n+1}\binom{n+m+1}{n+1}^{-1}
\]
for $x,y>0$ and $n,m\in\IN$ (where, in the last expression, the roles of $n$ and $m$ can be reversed due to the symmetry of the Beta function), cf. Chapter 6 in \cite{handbook}.

\subsection*{Cutting and separation} Consider a finite simple connected graph $G=(V,E)$ with a distinguished subset $S\ssq V$ whose vertices are referred to as sources. Now, proceed as follows:
\begin{enumerate}[1.]
 \item Choose a vertex $v\in V$ uniformly at random, and remove it -- together with all edges incident to $v$ -- from the graph. This will potentially split the graph into connected components, in which case we only keep the components containing sources. 
 \item Iterate step 1, where the randomness in choosing the node is assumed to be independent from everything that happened previously. 
 \item The process terminates once the graph contains no more vertices. Equivalently, this happens as soon as the last source node has been removed. 
\end{enumerate}
This defines a finite random sequence of induced subgraphs 
\begin{equation}\label{eq:cp1}
 G=: G_0 \supsetneq G_1 \supsetneq ... \supsetneq G_{r-1} \supsetneq G_r = \emptyset,
\end{equation}
where we denoted the empty subgraph consisting of no vertices with $\emptyset$ by a slight abuse of notation. Observe that therefore, we can view the cutting procedure as a time-discrete stochastic process on the state space consisting of subgraphs of $G$, and as such it is in fact a Markov-chain. We will denote this process by $\Cut(G)$.

Instead of describing the cutting procedure by the sequence of graphs we obtain, as in \eqref{eq:cp1}, we can equivalently keep track of the removed vertices, $v_1,...,v_r$. We will use both styles of bookkeeping interchangeably, depending on which one is more suitable for the task at hand. 

Introducing a second set of distinguished vertices, $T$, whose vertices we refer to as targets, we can now consider the following functionals of the cutting process:
\begin{itemize}
 \item The cutting number $\mf C(G)$. This is merely the number of cuts until the last source node is cut, or equivalently, until the remaining graph is empty, i.e. $\mf C(G)=\inf\{i\geq 0: G_i=\emptyset\}$. Note that this does not rely on $T$.
 \item The separation number $\mf S(G)$, defined to be the number of cuts until the remaining graph does not contain target nodes anymore (independently of how many sources are still present). In other words, $\mf S(G)=\inf\{i\geq 0:V(G_i)\cap T=\emptyset\}$. We say that at this time, separation (of $S$ and $T$) occurs. 
 \item The separation subgraph $G_{\mf S}:= G_{\mf S(G)}$, defined to be the random subgraph of $G$ at separation. 
 \item The separation node $v_{\mf S}$, denoting the last node that was removed before separation occurred. 
\end{itemize}

\begin{figure}[h]
 \includegraphics[]{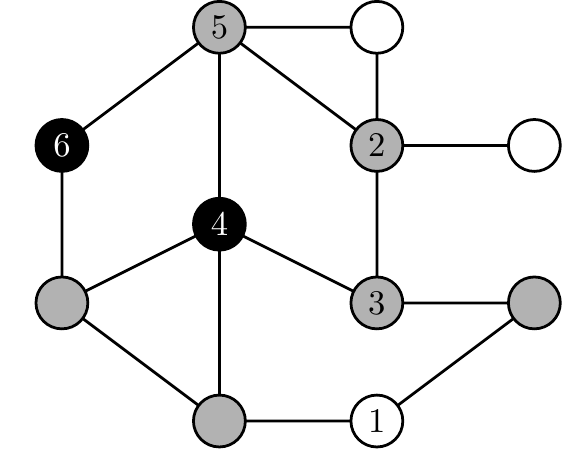}
 \caption{A graph $G$, with source nodes in black and target nodes in white. The number $i$ at a vertex indicates that this vertex was chosen by the $i$-th cut. Note that cuts 2 and 3 each remove an additional vertex. In this example, $\mf S(G)=5$, $\mf C(G)=6$ and $G_{\mf S}$ is a path on three vertices, beginning with the source labelled 6.}
\end{figure}

\subsection*{The continuous-time model} As has been observed previously by \cite{janson2006random} and since been brought to effective use, the above cutting model is equivalent to a model where each node is equipped with a random alarm clock whose alarm triggers after time $X_v$, $v\in V$. Whenever an alarm rings, and the corresponding node is still in the graph at this time, that node will be removed together with any new components that do not contain source nodes. Here, to ensure equivalence to the discrete-time cutting model above, we assume that $(X_v)_{v\in V}$ is an i.i.d. family of $\Exp(1)$-distributed random variables -- while any i.i.d. family of continuous random variables would suffice, the memoryless property will be useful later. 

This once again yields a monotone stochastic process of subgraphs of $G$, but now parametrised by continuous time, $(G^c_t)_{t\in[0,\infty)}$. We will denote this process by $\Cut^c(G)$. However, $G_t$ will only attain finitely many different graphs, and we will still denote those graphs by $G_0,G_1,G_2,...$ in order of occurrence, as before. Hence, we can denote by $G_{t^-}$ the graph that was attained by $\Cut^c(G)$ immediately before time $t$; so, $G_{t^-}=G_t$ iff no cut happened at time $t$. 

Note that there are two ways of generalising the random variables $\mf C$ and $\mf S$ to the continuous-time setting: By default, $\mf C$ and $\mf S$ respectively denote the quantities 
\[
  \mf C = \inf\{i\in\IN : G_i=\emptyset\} \text{ \ and \ } 
  \mf S = \inf\{i\in\IN : V(G_i)\cap T=\emptyset\},
\]
exactly as before, while $\mf C_c$ and $\mf S_c$ denote
\[
 \mf C_c = \inf\{t\geq0 : G_t=\emptyset\} \text{ \ and \ }
 \mf S_c = \inf\{t\geq0 : V(G_t)\cap T=\emptyset\},
\]
respectively.

\subsection*{A first example}
Denote by $P_n$ for $n\geq0$ a path on $2n+1$ vertices. Let the middle vertex be the unique source of $P_n$ and let $T$ be the two leaves. 

\begin{prop}\label{prop:path}
 Let $X\sim \Poi(\ln 2)$, and let $\mb P$ be the distribution of $1+X$ conditioned on $X$ being at least $1$. Then $\mf S[P_n]\dto \mb P$. In other words, 
 \[
   \bP[\mf S[P_n]\leq k] \to \bP[X\leq k-1|X\geq 1]=\sum_{j=1}^{k-1} \frac{(\ln 2)^j}{j!}
 \]
 for all $k\geq 0$. 
\end{prop}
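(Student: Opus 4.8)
The plan is to work in the continuous-time model $\Cut^c(P_n)$ and exploit a ``running minimum'' description of the cuts on each arm. Label $V(P_n)=\{v_{-n},\dots,v_{-1},v_0,v_1,\dots,v_n\}$, so $v_0$ is the source and $T=\{v_{-n},v_n\}$, and let $(X_v)_{v}$ be the i.i.d.\ $\Exp(1)$ clocks. As long as the source survives (which it does until $X_{v_0}$ rings, since it is never discarded), the current graph is a subpath $v_{-a}\cdots v_0\cdots v_b$, and on the left arm the vertex $v_{-j}$ is cut by its own clock precisely at the positions $j$ that are running minima of $X_{v_{-1}},X_{v_{-2}},\dots,X_{v_{-n}}$ (i.e.\ $X_{v_{-j}}<X_{v_{-l}}$ for all $l<j$), these cuts occurring in increasing order of time, until possibly $X_{v_0}$ rings and terminates the process; symmetrically on the right. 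In particular $v_{-n}$ disappears at time $m_L:=\min(X_{v_0},X_{v_{-1}},\dots,X_{v_{-n}})$ --- it is discarded at the first left cut, or removed with the source if $X_{v_0}$ is the smallest of these --- and $v_n$ at $m_R:=\min(X_{v_0},X_{v_1},\dots,X_{v_n})$; hence $\mf S_c=\max(m_L,m_R)$.

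Next I would discard the atypical scenarios in which the source clock matters. As $\min(X_{v_{-1}},\dots,X_{v_{-n}})$ and $\min(X_{v_1},\dots,X_{v_n})$ are minima of $n$ i.i.d.\ $\Exp(1)$'s and $X_{v_0}\sim\Exp(1)$ is independent, the event $A_n$ that $X_{v_0}$ exceeds both has $\bP[A_n^c]\le \tfrac{2}{n+1}$; on $A_n$ separation occurs at $\max(m_L,m_R)<X_{v_0}$, before any source cut. The automorphism $v_j\leftrightarrow v_{-j}$ preserves $S$ and $T$ and swaps $m_L\leftrightarrow m_R$, so $\bP[m_L<m_R]=\bP[m_R<m_L]$ while $\bP[m_L=m_R]=0$; it thus suffices to analyse the case $m_L<m_R$, where separation occurs at time $m_R$. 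On $A_n\cap\{m_L<m_R\}$ the cuts performed up to and including time $m_R$ are exactly: all left cuts at times $\le m_R$ (say $N$ of them, with $N\ge1$ since the first left cut is at $m_L<m_R$), the single first right cut (at time $m_R$), and no source cut; so $\mf S[P_n]=1+N$. Since $\{m_L<m_R\}$ will be seen to equal $\{N\ge1\}$, an event depending only on the $2n$ arm clocks, and $\bP[A_n^c]=O(1/n)$, this yields
\[
 \bP[\mf S[P_n]\le k]=2\,\bP[\,1+N\le k,\ N\ge1\,]+O(1/n).
\]

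The key step is to identify the law of $N$ together with the event $\{N\ge1\}$. A left cut at time $\le m_R$ is a position $j$ that is a running minimum of $X_{v_{-1}},\dots,X_{v_{-n}}$ and also satisfies $X_{v_{-j}}<\min_{1\le l\le n}X_{v_l}$; equivalently, in the sequence $X_{v_1},\dots,X_{v_n},X_{v_{-1}},X_{v_{-2}},\dots,X_{v_{-n}}$ the entry in position $n+j$ is a running minimum. By the classical fact that, for i.i.d.\ continuous variables, the events $\{$position $i$ is a running minimum$\}$ are independent with probabilities $1/i$, it follows that $N$ has the law of $\sum_{i=n+1}^{2n}B_i$ with $B_i\sim\Ber(1/i)$ independent; moreover $\{N\ge1\}$ is exactly the event that the overall minimum of the $2n$ arm clocks lies among positions $n+1,\dots,2n$, i.e.\ $\{m_L<m_R\}$ --- and, being an event of the arm clocks alone, it is independent of $X_{v_0}$, as used above. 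Finally $\sum_{i=n+1}^{2n}\tfrac{1}{i}\to\ln2$ and $\max_{i>n}\tfrac{1}{i}\to0$, so by a standard Poisson limit theorem for sums of independent indicators (total-variation distance from $\Poi(\sum_{i=n+1}^{2n}i^{-1})$ at most $\sum_{i=n+1}^{2n}i^{-2}\to0$) we get $N\dto Y$ with $Y\sim\Poi(\ln2)$; hence $\bP[N\ge1]\to\tfrac12$ and $\bP[N\le k-1\mid N\ge1]\to\bP[Y\le k-1\mid Y\ge1]=\sum_{j=1}^{k-1}(\ln2)^j/j!$, using $e^{-\ln2}=\tfrac12$. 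Substituting into the displayed identity gives $\bP[\mf S[P_n]\le k]\to\sum_{j=1}^{k-1}(\ln2)^j/j!$, which is the distribution function of $\mb P$; that is, $\mf S[P_n]\dto\mb P$.

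I expect the combinatorial identification in the last paragraph to be the real obstacle: one must verify carefully that the number of left cuts occurring before the first right cut equals the number of running minima among the last $n$ entries of a sequence of $2n$ i.i.d.\ continuous random variables, and that this count is positive exactly when the first left cut precedes the first right cut. This is precisely what makes the conditioning ``$\mid X\ge1$'' in the definition of $\mb P$ appear of its own accord; the passage to continuous time, the removal of the source via $A_n$, and the Poisson limit are all routine.
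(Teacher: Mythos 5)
Your proof is correct, but it follows a genuinely different route from the paper. The paper embeds $P_n$ into $[0,1]$, passes to a space-continuous limit model (uniform cuts on the surviving interval containing $1/2$), and evaluates $\bP[\mf S[0,1]=k]=\tfrac{(\ln 2)^{k-1}}{(k-1)!}$ by an iterated integral; the convergence of the discrete model to this limit object is only sketched there via a reference to Billingsley. You instead stay with the exponential-clock model and exploit the record structure: the vertices actually cut on each arm are the running minima of the clocks read outward from the source, so that (after discarding the $O(1/n)$-probability event that the source clock beats an arm minimum, and symmetrizing) $\mf S[P_n]=1+N$ where $N$ counts the running minima in positions $n+1,\dots,2n$ of a sequence of $2n$ i.i.d.\ continuous variables. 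Your key combinatorial identification checks out: position $n+j$ of the concatenated sequence is a running minimum precisely when $v_{-j}$ is a left cut occurring before the first right cut, the event $\{N\geq 1\}$ is exactly $\{m_L<m_R\}$ on $A_n$, and R\'enyi's independence of record indicators plus Le Cam's bound gives $N\dto\Poi\big(\sum_{i=n+1}^{2n}i^{-1}\big)\to\Poi(\ln 2)$, after which the factor $2$ from symmetrization cancels $e^{-\ln 2}=\tfrac12$. What your approach buys is a fully discrete, quantitative argument (explicit $O(1/n)$ error plus a total-variation bound) that avoids the scaling-limit step entirely and explains conceptually where $\ln 2$ comes from, namely as $\lim\sum_{i=n+1}^{2n}1/i$; it also ties this example to the standard records technique used by Janson for cutting numbers. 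What the paper's approach buys is a clean limiting object and a one-line integral evaluation, at the cost of a less rigorous passage to the limit.
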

\begin{proof}
 Embed $P_n, n\geq 1$ in $[0,1]$ by mapping the $2n+1$ vertices to equidistant points $p_{n,i}=\frac i {2n}$ for $i=0,1,...,2n$ such that neighbouring vertices have a distance of exactly $\frac 1{2n}$. Note that independently of $n$, the root vertex will always be at $1/2$, and the leaves at $0$ and $1$. 
 
 Given that a cut is chosen uniformly among the discrete vertices of the component containing the source, the limit of the cut position as $n$ tends to infinity will be uniformly drawn from an interval containing $1/2$ under the above embedding, which follows from Theorem 7.8 in \cite{billingsley1968convergence}. Thus, in the limit, we can investigate a space-continuous cutting model: We begin with the interval $[0,1]$, we then choose a point from the interval uniformly at random, and consider the component containing $1/2$. Now, we repeat this procedure, where the randomness in every step is only dependent on the current interval. In analogy to the discrete cutting number, we can define $\mf S[0,1]$ to be the time of the first cut where the remaining interval is of the form $(a,b)$ with $0<a<b<1$. By this construction, we have $\mf S[P_n]\dto \mf S[0,1]$.
 
 In this continuous setting, $\mf S[0,1]=k$ for $k\geq 2$ means that the first $k-1$ cuts all happened on the same side of $1/2$, i.e. in $(1/2,1]$ or in $[0,1/2)$, and the $k$-th cut took place on the other side. Since the entire setup is symmetric w.r.t. $1/2$, assume without loss of generality that the first cut happens inside $[1/2,1]$. The probability for $\{\mf S[0,1]=k\}$ to happen is thus expressed by the following iterated integral, where $c_1,c_2,...$ denote the first, second, ... cut: 
 \begin{equation}\label{eq:path}
  \bP[\mf S[0,1]=k] 
  = 2\int_{1/2}^1 \int_{1/2}^{c_1} \cdots \int_{1/2}^{c_{k-2}} \frac{1/2}{c_{k-1}} \cdot\frac{\Di c_{k-1}}{c_{k-2}} \cdots \frac{\Di c_2}{c_1} \Di c_1 .
 \end{equation}
 Here, the innermost integrand $1/(2c_{k-1})$ is the probability of $c_k$ falling into the interval $[0,1/2]$ given that the last cut was $c_{k-1}$; equivalently, given that the interval for $c_k$ to choose from was $[0,c_{k-1})$. The nested integrals in \eqref{eq:path} then arise from repeatedly applying the law of total probability and can be evaluated iteratively to yield $\frac{\ln(2)^{k-1}}{(k-1)!}$, as required. 
\end{proof}

This result is in stark contrast with $\mf C(P_n)$, for which it is known that
\[
 \frac{\mf C(P_n)-2\ln n}{\sqrt{2\ln n}} \dto N(0,1)
\]
as $n\to\infty$ (cf e.g. Example 8.1 in \cite{janson2006random} and references therein).

\section{Monotonicity, cutting, and site percolation}

The purpose of this section is to state and prove some basic properties concerning the behaviour of the separation time when changing the underlying graph or the selection of source/target nodes. For this purpose, we will write $\mf S(G;S,T)$ instead of $\mf S(G)$ whenever there is potential ambiguity over the choice of source and target nodes. The following definition will prove useful:

\begin{defn}
 Let $G=(V,E)$ be a finite graph, and let $S,T\ssq V$ be distinguished sets of sources and targets. A vertex $v\in V$ is called \emph{essential} if $G$ contains a path $sv_1...v_kt$ containing $v$, such that $s$ and $t$ are the unique vertices on this path that are contained in $S$ and $T$, respectively. Call a vertex \emph{non-essential} if no such path exists. 
\end{defn}

Observe that it is possible both for source and target nodes to be non-essential -- this is the case precisely if every relevant path passes through another source or target node.

If two graphs $G,G'$ on a common vertex set with respective source and target vertices $S,T,S',T'\ssq V$ are given, and if they are such that any sequence of removed vertices $v_1,\dots,v_r$ that separates $S$ from $T$ in $G$ also separates $S'$ from $T'$ in $G'$, then $\mf S(G';S',T')\leq \mf S(G;S,T)$ and $\mf S_c(G';S,T)\leq \mf S_c(G;S,T)$. This argument proves each claim in the next proposition: 

\begin{prop}\label{prop:mono}
 Let $G=(V,E)$ be a finite connected graph, and let $S,T\ssq V$ be the sets of source and target nodes, respectively. The following is true:
 \begin{enumerate}[(i)]
  \item If $G'$ is a subgraph of $G$ endowed with source and target nodes inherited from $G$, then $\mf S(G')\leq \mf S(G)$. 
  \item If $S$ and $T$ are replaced by subsets $S'\ssq S$ and $T'\ssq T$ respectively, then $\mf S(G;S',T')\leq \mf S(G;S,T)$. 
  \item If $T$ is replaced by a set $T'$ of essential vertices such that every path from $S$ to $T$ intersects $T'$, then $\mf S(G;S,T)\leq \mf S(G;S,T')$. 
  \item If $S$ is replaced by a set $S'$ of essential vertices such that every path from $S$ to $T$ intersects $S'$, then $\mf S(G;S,T)\leq \mf S(G;S',T)$.
 \end{enumerate}
 All three claims remain correct if $\mf S$ is replaced by its continuous-time counterpart $\mf S_c$. 
\end{prop}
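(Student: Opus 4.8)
The plan is to reduce all of (i)--(iv) to the criterion highlighted just before the statement, which I would first anchor in the continuous-time model. Couple the two instances being compared by one family $(X_v)_{v\in V}$ of i.i.d.\ $\Exp(1)$ clocks, put $R_t=\{v:X_v\le t\}$, and use the memoryless property together with the observation that discarding source-less components never changes which vertices ultimately stay joined to a source. This yields the percolation description: $v\in V(G^c_t)$ iff $v$ is joined to $S$ in $G\setminus R_t$, so $\mf S_c(G;S,T)$ is the first time $t$ at which $R_t$ is an $S$--$T$ vertex separator of $G$; moreover $v$ is \emph{effectively cut} (i.e.\ contributes $1$ to $\mf S$) iff $v$ is joined to $S$ in $G\setminus\{u:X_u<X_v\}$, and $\mf S(G;S,T)$ is the number of effective cuts up to separation, a quantity nondecreasing in $t$. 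Consequently: if every $S$--$T$ separator of $G$ is an $S'$--$T'$ separator of $G'$, then $\mf S_c(G';S',T')\le\mf S_c(G;S,T)$ pointwise; if in addition every vertex effectively cut by $\Cut^c(G';S')$ before separation is effectively cut by $\Cut^c(G;S)$ before separation, then $\mf S(G';S',T')\le\mf S(G;S,T)$ pointwise. It remains to check these hypotheses case by case.

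For (i), with $S'=S\cap V(G')$ and $T'=T\cap V(G')$, the graph $G'$ with its fired clocks removed is a subgraph of $G$ with its fired clocks removed (fewer vertices, fewer edges), so being joined to $S'\subseteq S$ in the former forces being joined to $S$ in the latter; both hypotheses follow, and $T'\subseteq T$ gives the separator containment. Case (ii) is this with $G'=G$. For (iii) the larger instance is $(G,S,T')$, and since $\Cut^c(G;S)$ does not depend on the target set the effective-cut count is literally the same process for $T$ and $T'$; one needs only that $R_t$ separating $S$ from $T'$ forces $R_t$ to separate $S$ from $T$, which holds because any $S$--$T$ path avoiding $R_t$ meets $T'$ by hypothesis and hence contains an $S$--$T'$ subpath avoiding $R_t$. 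Case (iv) is the mirror statement at the level of separators: an $S$--$T$ path avoiding $R_t$ meets $S'$ by hypothesis, hence contains an $S'$--$T$ subpath avoiding $R_t$, so $R_t$ separating $S'$ from $T$ forces $R_t$ to separate $S$ from $T$. This already delivers the $\mf S_c$ statement of (iv), and likewise the $\mf S_c$ statements announced for all four parts.

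\textbf{Main obstacle.} The genuinely non-routine point is the \emph{discrete} inequality in (iv): unlike in (iii) the cutting process depends on the source set, and $S'\not\subseteq S$ in general, so a vertex may be effectively cut by $\Cut^c(G;S)$ but not by $\Cut^c(G;S')$ --- a component carrying a source of $S$ but no vertex of $S'$ survives in the first run while being discarded in the second, so the naive ``symmetric to (iii)'' reasoning does not literally apply. I would organise this around the partition $V=A\sqcup S'\sqcup B$ with $A=\{v:v\text{ joined to }S\text{ in }G\setminus S'\}$ and $B=V\setminus(A\cup S')$; since $S'$ is an $S$--$T$ separator one checks $T\subseteq B\cup S'$, $S\subseteq A\cup S'$, and that no edge of $G$ joins $A$ to $B$. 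Every effective cut of $\Cut^c(G;S)$ before separation that lies in $S'\cup B$ is also one of $\Cut^c(G;S')$ (a $B$-vertex joined to $S$ along later-clock vertices must first pass through $S'$; an $S'$-vertex is never discarded), while the remaining effective cuts, those in $A$, are cuts spent in pieces of $G$ already cut off from $T$. The crux is to show that the number of these $A$-side cuts is absorbed by the surplus of effective cuts that the longer-running process $\Cut^c(G;S')$ makes, its separation time being $\ge$ that of $\Cut^c(G;S)$ --- essentially a conservation argument comparing the two runs on the $S'$-separated sides of $G$. Everything else is the short separator bookkeeping of the preceding paragraph.
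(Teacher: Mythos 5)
Your reduction criterion (clock coupling, $v\in V(G^c_t)$ iff $v$ is joined to $S$ in $G\setminus R_t$, and $\mf S$ as the count of effective cuts up to the separation time) is correct, and your verifications of (i), (ii), (iii) and of all four continuous-time statements are complete; this is precisely the paper's one-sentence argument preceding the proposition, made rigorous.

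The obstacle you isolate in the discrete part of (iv) is, however, not a technicality that a conservation argument can absorb: the inequality $\mf S(G;S,T)\leq\mf S(G;S',T)$ is false in general once $|S|\geq 2$, and the mechanism is exactly the one you describe. Take the tree on $\{s_0,s_1,a,c,t,w_1,\dots,w_k\}$ with edges $s_0w_i$ $(1\leq i\leq k)$, $s_0a$, $ac$, $ct$, $cs_1$, and set $S=\{s_0,s_1\}$, $T=\{t\}$, $S'=\{c\}$; then $c$ is essential (witness path $s_1ct$) and every $S$--$T$ path passes through $c$. If the clocks ring in the order $a,w_1,\dots,w_k,t,\dots$, the $(S,T)$-process keeps both components created by cutting $a$ (each contains a source) and then spends $k$ effective cuts on the $w_i$ before cutting $t$, so $\mf S(G;S,T)=k+2$, while the $(S',T)$-process discards $\{s_0,w_1,\dots,w_k\}$ at its first cut and separates at its second, so $\mf S(G;S',T)=2$. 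This is not merely a failure of the coupling: the probability that a given $w_i$ is an effective cut before separation exceeds the corresponding probability for the $(S',T)$-process by exactly $\bP\left[X_a<X_{w_i}<\min(X_{s_0},X_{s_1},X_t,X_c)\right]=\tfrac15-\tfrac16=\tfrac1{30}$, and all other effective cuts number at most five in either process, so $\bE[\mf S(G;S,T)]>\bE[\mf S(G;S',T)]$ for large $k$ and the claim fails in distribution as well. Your $A\sqcup S'\sqcup B$ analysis does close the argument when $|S|=1$: there, a surviving vertex whose component has lost contact with $S'$ forces the unique source to have lost contact with $T$, so separation has already occurred and no $A$-side cuts are counted. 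So (iv) should carry the extra hypothesis $|S|=1$ (or be asserted only for $\mf S_c$); the paper's own proof glosses over exactly the point you flagged.
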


\begin{rem}
 It follows from (i) in Proposition~\ref{prop:mono} that one has the following chain of inequalities for any connected graph $G=(V,E)$ with $S,T\ssq V$: 
 \begin{equation}\label{eq:bounds}
  \mf S(\mc T) \leq \mf S(G) \leq \mf S(K_V)
 \end{equation}
 where $\mc T$ is a spanning tree of $G$ and $K_V$ is the complete graph on the vertex set $V$. 
 Observe that for some graphs, both the upper and lower estimates may hold with equality: Indeed, assume $K_V$ has a single source note $s$, and set $\mc T$ to be the spanning tree of $K_V$ that is a star graph centered on $s$. Running the cutting procedure on either $K_V$ or $\mc T$ will amount to removing one vertex at a time until the last source vertex is removed, thus showing equality.
 
 It should also be noted that the continuous-time separation time is a lot more stable under changes to $G$, as demonstrated by the following lemma:
\end{rem}

\begin{lemma}
 Let $G=(V,E)$ and $S,T\ssq V$ as above. Denote by $G'$ the graph obtained by removing all non-essential vertices from $G$. Then $\mf S_c[G]=\mf S_c[G']$ deterministically. 
\end{lemma}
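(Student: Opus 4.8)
The plan is to prove the stronger pointwise statement by coupling $\Cut^c(G)$ and $\Cut^c(G')$: equip every vertex with the \emph{same} $\Exp(1)$ alarm clock $X_v$, and for $G'$ simply ignore the clocks attached to the non-essential vertices. Here $G'$ carries the inherited sources and targets $S':=S\cap V(G')$ and $T':=T\cap V(G')$; taking a shortest $S$--$T$ path shows $S',T'\neq\emptyset$, so the cutting procedure applies verbatim to the (possibly disconnected) graph $G'$. It then suffices to show that, along this coupling, $V(G_t)\cap T=\emptyset$ holds if and only if $V(G'_t)\cap T'=\emptyset$, for every $t\geq 0$; taking infima over $t$ yields $\mf S_c[G]=\mf S_c[G']$ everywhere on the probability space.

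The first step is to describe $G_t$ without reference to the discarded components. Put $A_t:=\{v\in V(G):X_v>t\}$. I claim that, for every $t$, the graph $G_t$ equals the union of those connected components of the induced subgraph $G[A_t]$ that contain a source. This follows by induction over the (a.s.\ pairwise distinct) alarm times: before the first alarm both sides equal $G$; when the alarm of a vertex $u$ that is not currently present rings, $u$ lies in no source-component of $G[A_{t^-}]$, so deleting it changes neither side; and when the alarm of a present vertex $u$ rings, deleting $u$ from $G[A_{t^-}]$ and retaining the source-components produces the same graph whether one starts from $G[A_{t^-}]$ or from its source-components, since the non-source components of $G[A_{t^-}]$ contain no source and are left untouched. (Implicit here is that a source, always lying in its own source-component, is never discarded; this is also why both sides agree — and are empty — once the last source clock has rung.)

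With this description in hand, $V(G_t)\cap T\neq\emptyset$ precisely when $G[A_t]$ contains a path joining some $s\in S$ to some $t_0\in T$; deleting an initial and a final segment, we may assume $s$ and $t_0$ are the only vertices of the path lying in $S$ and $T$ respectively. Such a ``clean'' path exhibits, straight from the definition, each of its vertices as essential, so all of them lie in $V(G')$ and the path is a clean $S'$--$T'$ path of $G'[A_t\cap V(G')]$. Conversely, since $G'$ is an induced subgraph of $G$, every clean $S'$--$T'$ path of $G'[A_t\cap V(G')]$ is a clean $S$--$T$ path of $G[A_t]$. Applying the description of the reached graph once more, now to $G'$, we conclude that $V(G_t)\cap T=\emptyset$ and $V(G'_t)\cap T'=\emptyset$ are equivalent for every $t$, as required.

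There is no genuine obstacle here beyond bookkeeping; the one delicate point is the inductive identification of $G_t$ with the source-components of $G[A_t]$, where one must keep track of which vertices have actually been discarded. The degenerate configurations one might worry about are harmless: a vertex of $S\cap T$, or a non-essential source or target, never occurs as an interior vertex of a clean $S$--$T$ path, and a source or target occurring as an endpoint of such a path is essential by definition and hence survives in $G'$.
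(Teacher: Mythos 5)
Your proposal is correct and follows essentially the same route as the paper: couple the two processes by reusing the clocks $X_v$ for $v\in V(G')$, and observe that any surviving $S$--$T$ connection can be shortened to a path whose only source and target vertices are its endpoints, so that all its vertices are essential and the path lives in $G'$ as well. The only difference is presentational — you prove both inequalities via the clean-path equivalence (and spell out the identification of $G_t$ with the source-components of $G[A_t]$), whereas the paper gets $\mf S_c[G]\geq\mf S_c[G']$ directly from subgraph monotonicity and argues the converse by contradiction.
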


\begin{proof}
 Denote by $G=G_0,G_1,G_2,...$ and $G'=G'_0,G'_1,G'_2,...$ the continuous-time cutting procedures on $G$ and $G'$, respectively. As $V(G')\ssq V(G)$, we can use the same randomness for $X_v$ in both graphs when $v\in V(G')$. Hence we certainly have $\mf S_c[G]\geq \mf S_c[G']$. Now assume that at time $t\geq\mf S_c[G']$, there still is a path $sv_1...v_kt$ connecting $S$ and $T$ in $G_t$ (thus necessarily using non-essential nodes). This path must contain vertices $v_i,v_j$ with $i\leq j$ s.t. $v_i\in S$, $v_j\in T$ and none of the vertices on the path between $v_i$ and $v_j$ are in either $S$ or $T$. However, by definition all vertices $v_i,v_{i+1},...,v_j$ are essential and thus in $G'$, contradicting the assumption that separation in $G'$ already occurred.
\end{proof}

The following proposition asserts that the additional freedom of choosing target nodes for the separation number can be used to obtain the original cutting number. In other words, $\mf S(G)$ can be understood as a generalisation of $\mf C(G)$. 

\begin{prop}\label{prop:cutting}
 Let $G=(V,E)$ be a finite connected graph, and let $S,T\ssq V$ be the sets of source and target nodes, respectively. Then, we have $\mf S(G)\leq \mf C(G)$ deterministically, with equality if $S\ssq T$. Conversely, if $\mf S(G)=\mf C(G)$ in distribution, then $S\ssq T$. Moreover, all of those statements also hold true for $\mf S_c(G)$ and $\mf C_c(G)$ in the continuous-time model. 
\end{prop}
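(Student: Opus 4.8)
The plan is to establish the three separate assertions in turn: (1) the deterministic inequality $\mf S(G)\le\mf C(G)$; (2) that $S\ssq T$ forces equality; and (3) the converse, that equality in distribution forces $S\ssq T$. For (1) and (2) I would argue directly from the definitions. Since $G_{\mf C}=\emptyset$ has empty vertex set, it certainly contains no target, so $\{i:V(G_i)\cap T=\emptyset\}$ contains $\mf C$, whence $\mf S\le\mf C$. For the equality case, suppose $S\ssq T$. At every time $i<\mf C$ the graph $G_i$ is nonempty and, being a component retained by the cutting procedure, contains at least one source node; since $S\ssq T$, that source is also a target, so $V(G_i)\cap T\ne\emptyset$ and hence $i<\mf S$. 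Therefore $\mf S\ge\mf C$, giving $\mf S=\mf C$. This argument uses only the realization-wise structure of the sequence \eqref{eq:cp1}, so it applies verbatim in continuous time with $G_i$ replaced by $G_t$ (and $\mf C,\mf S$ by $\mf C_c,\mf S_c$): the same coupling $(X_v)_{v\in V}$ drives both functionals on the same sample path.

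For the converse (3), I would argue by contraposition: assume $S\not\ssq T$ and exhibit a positive-probability event on which $\mf S<\mf C$, which suffices since we always have $\mf S\le\mf C$. Pick a source $s\in S\setminus T$. Consider the event that the cutting procedure removes, one at a time, all vertices of $T$ before the component containing $s$ has been reduced to $\{s\}$ — for instance, order the cuts so that every target is removed while $s$ is still present; this is possible with positive probability because $G$ is finite and connected (so some ordering of $V$ removing $T$ first and keeping $s$ until later is realizable, each such ordering having probability at least $1/|V|!>0$ in discrete time, and positive probability in continuous time by absolute continuity of the $X_v$). On this event separation has occurred (no target remains) while $s\in S$ is still in the graph, so the procedure has not terminated and $\mf S<\mf C$. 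Hence $\bP[\mf S<\mf C]>0$, so $\mf S\ne\mf C$ in distribution, proving the contrapositive.

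The main obstacle is making the positive-probability event in step (3) clean without over-specifying it: I want an explicit, easily-seen-to-be-realizable sequence of cuts that kills all of $T$ while preserving some $s\in S\setminus T$. The safe route is to note that since $G$ is connected and finite, one can simply enumerate $V\setminus\{s\}$ in an order that places all target vertices first; running the cuts in exactly this order is a positive-probability event (probability $\prod_{j}1/|V_j|$ over the successive component sizes, which is bounded below by $1/|V|^{|V|}>0$), and after the $|T|$-th cut no target survives while $s$ does, since $s$ is never chosen and never discarded (it is a source). One should also remark that essentiality of nodes is not needed here, only connectedness. The continuous-time versions of all three claims follow because everything was phrased pathwise and the only probabilistic input — that a prescribed finite cut-sequence has positive probability — holds equally for i.i.d.\ $\Exp(1)$ clocks; indeed, by \cite{janson2006random}, $\Cut^c(G)$ and $\Cut(G)$ induce the same law on cut-orderings, so (3) transfers immediately.
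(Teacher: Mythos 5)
Your proposal is correct and follows essentially the same route as the paper: the deterministic inequality and the $S\ssq T$ case are argued identically from the definitions, and for the converse both proofs exhibit a positive-probability event on which some $s\in S\setminus T$ survives past separation, forcing $\mf S<\mf C$ strictly there and hence (via $\mf S\le\mf C$ a.s.\ and finite expectations) a discrepancy in distribution. The only cosmetic difference is that you make the positive-probability cut-ordering explicit where the paper simply asserts $\bP[v\in V(G_{\mf S})]=p_0>0$, and in continuous time you argue pathwise whereas the paper uses memorylessness of $\Exp(1)$ to get the quantitative bound $\bE[\mf C_c]-\bE[\mf S_c]\geq p_0$; both are valid.
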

\begin{proof}
 At time $\mf C(G)$, the remaining graph is empty, so separation must have occurred already. Thus $\mf S(G)\leq \mf C(G)$.
 
 If $S\ssq T$, then separation will occur as soon as the last source node has been removed, at which time the remaining graph will be empty. Thus $\mf S(G)=\mf C(G)$.
 Conversely, if there exists $v\in S\setminus T$ then $G_{\mf S}$ contains $v$ with some positive probability $p_0$. If this happens, $\mf S(G)\leq \mf C(G)-1$, so $\bE[\mf C(G)]-\bE[\mf S(G)]\geq p_0$, and equality in distribution cannot hold. 
 
 For the continuous-time model, only the last argument requires modification: Once again, if $v\in S\setminus T$ then $G_{\mf S}$ contains $v$ with probability $p_0>0$. In this case, $\mf S_c(G) < X_v \leq \mf C_c(G)$, and we have 
 \[
  \bE[\mf C_c(G)]-\bE[\mf S_c(G)] \geq p_0 \bE[X_v-\mf S_c(G)\mid v\in V(G_{\mf S})]=p_0
 \]
 since $X_v\sim\Exp(1)$ is memoryless.
\end{proof}

\subsection*{Separation and site percolation}

In this section, we show that in a certain sense, the continuous-time separation model on an infinite graph $G$ with infinite distance $\dist(S,T)$ contains the site percolation model on $G$. 

More precisely, recall that for Bernoulli site percolation in an infinite graph $G$, every node is independently kept with some probability $p\in[0,1]$ and otherwise rejected, thus giving a random subgraph of $G$. We denote by $\perc_S(p)$ the probability that the $\Ber(p)$-site percolation on $G$ exhibits an infinite cluster containing at least one vertex of $S$. 

\begin{defn}\label{defn:exhaust}
 Let $G$ be a locally finite, infinite connected graph, containing two subsets $S,T\ssq V(G)$. We say that the sequence $(G^{(n)})_{n\geq 1}$ of finite induced subgraphs of $G$ \emph{exhausts} $G$ if the following conditions are satisfied:
 \begin{enumerate}[(i)]
  \item The $G^{(n)}$ are connected subgraphs satisfying
  \[
   G^{(1)}\ssq G^{(2)} \ssq ... \ssq G \quad \text{ and } \quad 
   \bigcup_{n\geq1} V\left(G^{(n)}\right) = V(G).
  \]
  \item The set $S$ is entirely contained in $G^{(n)}$ for all $n$ (and understood to be the set of source nodes of $G^{(n)}$), and each subgraph $G^{(n)}$ is endowed with the target set $T^{(n)} := \left(T\cap V\left(G^{(n)}\right)\right)\cup \partial_\emptyset\left(G\setminus G^{(n)}\right)$.
 \end{enumerate}
\end{defn}

Observe that the target nodes $T^{(n)}$ are indeed a subset of $V\left(G^{(n)}\right)$ and will be non-empty even if $T=\emptyset$. Moreover, condition (ii) necessitates that $S$ is finite. 

\begin{prop}\label{prop:perc}
 Let $G$ be a locally finite, infinite graph with a finite set of source nodes, $S$, and let $T=\emptyset$. Assume that $(G^{(n)})_{n\geq 1}$ exhausts $G$. 
 
 Then, 
 \begin{equation}\label{eq:perc}
  \lim_{n\to\infty} \bP\left[\mf S_c\big(G^{(n)}\big)\geq x\right] = \perc_S(e^{-x}).  
 \end{equation}
\end{prop}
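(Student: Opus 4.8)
The plan is to recognise that a snapshot of the continuous-time cutting procedure on $G^{(n)}$ taken at a fixed time $x$ is exactly a Bernoulli site percolation configuration, and then to pass to the limit $n\to\infty$ with all these percolations coupled on one probability space. We may assume $S\neq\emptyset$ (the case $S=\emptyset$, if permitted at all, is immediate: then $\mf S_c(G^{(n)})=\min_{v\in V(G^{(n)})}X_v$ with tail $e^{-x\,|V(G^{(n)})|}\to 0=\perc_S(e^{-x})$, since $|V(G^{(n)})|\to\infty$). \emph{Step 1, a percolation snapshot.} Running $\Cut^c(G^{(n)})$ with alarm clocks $(X_v)_{v\in V(G^{(n)})}$, I would first show that for every $t\geq 0$,
\[
 V(G^c_t)=\bigl\{v\in V(G^{(n)}): v \text{ is joined to } S \text{ in the subgraph of } G^{(n)} \text{ induced by } \{w:X_w>t\}\bigr\}.
\]
The inclusion ``$\ssq$'' is clear: a vertex present at time $t$ has not yet rung (so $X_v>t$) and lies, within $G^c_t$, in a connected component meeting $S$, while every vertex of $G^c_t$ has alarm $>t$. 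For ``$\supseteq$'', suppose $v=u_0\sim u_1\sim\dots\sim u_k\in S$ with all $X_{u_i}>t$, and let $t_0\leq t$ be the first time some $u_i$ gets removed; the vertex whose alarm rings at $t_0$ is not on the path (its alarm is $\leq t<X_{u_i}$), and deleting an off-path vertex cannot detach the still-intact path from $S$, so no $u_i$ is discarded at time $t_0$ --- a contradiction. Hence the whole path, and in particular $v$, survives to time $t$. (This is a routine first-exit-time argument; memorylessness is not even needed here.)

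\emph{Step 2, reduction to a percolation statement.} Since $V(G^c_t)$ is non-increasing in $t$ and takes finitely many values, $\mf S_c(G^{(n)})\geq x$ holds iff $V(G^c_t)\cap T^{(n)}\neq\emptyset$ for all $t<x$, which by Step 1 is equivalent to: the cluster of $S$ in the subgraph of $G^{(n)}$ induced by $\{w\in V(G^{(n)}):X_w\geq x\}$ meets $T^{(n)}$ (letting $t\uparrow x$; the $\geq$ versus $>$ distinction is immaterial since $\mf S_c$ has no atoms). As $\bP[X_w\geq x]=e^{-x}$ independently over $w$, this set is a $\Ber(e^{-x})$ configuration. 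Realising all the $X_v$, $v\in V(G)$, on one probability space, the relevant configurations are the restrictions to $V(G^{(n)})$ of the single $\Ber(e^{-x})$ configuration $\omega:=\{v\in V(G):X_v\geq x\}$ on $G$, and
\[
 \bP\bigl[\mf S_c(G^{(n)})\geq x\bigr]=\bP[A_n],\qquad A_n:=\{\text{the }\omega\text{-cluster of }S\text{ inside }G^{(n)}\text{ meets }T^{(n)}\}.
\]

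\emph{Step 3, the boundary lemma and the limit.} Set $A_\infty:=\{\text{some }s\in S\text{ lies in an infinite }\omega\text{-cluster of }G\}$, so $\bP[A_\infty]=\perc_S(e^{-x})$. The crucial observation is: \emph{if the $\omega$-cluster of $S$ inside $G^{(n)}$ misses $T^{(n)}=\partial_\emptyset(G\setminus G^{(n)})$, then it has no $G$-neighbour outside $V(G^{(n)})$, so it coincides with the $\omega$-cluster of $S$ in all of $G$} --- were it properly smaller, an open edge of $G$ would leave it, necessarily landing inside $V(G^{(n)})$ and contradicting maximality there. This gives $A_n^c\ssq A_\infty^c$, i.e.\ $A_\infty\ssq A_n$ for every $n$ (so in fact $\bP[\mf S_c(G^{(n)})\geq x]\geq\perc_S(e^{-x})$ always). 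Conversely, fix $\omega\in A_\infty^c$: then $\bigcup_{s\in S}\{\omega\text{-cluster of }s\}$ is finite (a finite union of finite clusters, using $|S|<\infty$), hence by local finiteness its closed $G$-neighbourhood is finite and lies in $V(G^{(n)})$ for all large $n$; for such $n$ the $\omega$-cluster of $S$ inside $G^{(n)}$ equals the one in $G$ and avoids $T^{(n)}$, i.e.\ $\omega\in A_n^c$. Thus $\limsup_n A_n\ssq A_\infty\ssq\liminf_n A_n$, so $\one_{A_n}\to\one_{A_\infty}$ pointwise and bounded convergence yields $\bP[\mf S_c(G^{(n)})\geq x]=\bP[A_n]\to\bP[A_\infty]=\perc_S(e^{-x})$.

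The main obstacle I anticipate is Step 1: pinning down precisely which induced subgraph the continuous-time process occupies at a \emph{fixed} deterministic time, and checking that it is exactly the percolation cluster of $S$. Once that equivalence is in hand, Step 3 is a soft topology/compactness argument whose only genuine input is that $T^{(n)}$ was \emph{defined} as $\partial_\emptyset(G\setminus G^{(n)})$, which is exactly what makes ``a target survives'' synonymous with ``the cluster of $S$ escapes $G^{(n)}$''.
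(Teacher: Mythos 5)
Your proposal is correct and follows essentially the same route as the paper: couple the snapshot of $\Cut^c\big(G^{(n)}\big)$ at time $x$ with $\Ber(e^{-x})$-site percolation on $G$, observe that an infinite cluster meeting $S$ must cross $T^{(n)}=\partial_\emptyset\big(G\setminus G^{(n)}\big)$ (lower bound), and that on the complementary event the finite cluster of $S$ is eventually trapped strictly inside $G^{(n)}$ (upper bound). The only difference is cosmetic and lies in the upper bound, where the paper truncates the cluster mass at $k$ and uses $\dist\big(S,T^{(n)}\big)\to\infty$ before letting $k\to\infty$, whereas you establish pointwise convergence $\one_{A_n}\to\one_{A_\infty}$ and invoke bounded convergence --- an equivalent, and if anything slightly cleaner, way of taking the same limit.
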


\begin{proof}
 Note first that if $T=\emptyset$, then (using the notation of Definition \ref{defn:exhaust}) $\dist\left(S,T^{(n)}\right)\to\infty$ as $n\to\infty$. Indeed, there would otherwise be a bound $C\in\IN$ with $\dist\left(S,T^{(n)}\right) < C$. Consider the neighbourhood $B_C(S):=\{v\in V(G):\dist(v,S)\leq C\}$ of $S$. Since $B_C(S)$ is finite, it will eventually be contained in all $G^{(n)}$, contradicting the notion that $T^{(n)}=\partial_\emptyset\left(G\setminus G^{(n)}\right)$ contains vertices in $B_{C-1}(S)$. 

 Recall next that independently removing each vertex $v$ of $G$ at a random time $X_v\sim\Exp(1)$ gives rise to the monotonous coupling of Bernoulli site percolation for all parameters $p\in[0,1]$ (cf. \cite[p.138]{lyons2017probability}). Indeed, at time $x\in[0,\infty]$, the graph we observe that way is a sample of $\Ber(e^{-x})$-site percolation on $G$. We can couple the process obtained in this way to the continuous-time cutting model by restricting our attention to the intersection of $G^{(n)}$ with those percolation clusters that intersect $S$. 
 
 To show $\geq$ in \eqref{eq:perc}, assume that $\Ber(e^{-x})$-site percolation exhibits an infinite cluster which intersects $S$, such cluster necessarily intersects $T^{(n)}$ as well and hence, for each $n$, contains a path connecting $S$ with $T^{(n)}$. By the coupling indicated above, this path must then also be present in the sample of the continuous-time cutting model on $G^{(n)}$ at time $x$. Therefore, $\perc_S(e^{-x}) \leq \bP\left[\mf S_c\big(G^{(n)}\big)\geq x\right]$, and letting $n$ tend to $\infty$ yields $\liminf_{n\to\infty} \bP\left[\mf S_c\big(G^{(n)}\big)\geq x\right]\geq \perc_S(e^{-x})$.
 
 For the other inequality, suppose now that $\Ber(e^{-x})$-site percolation does not exhibit an infinite cluster intersecting $S$, so that the total mass of clusters intersecting $S$ is bounded by some finite integer, say $k$. By the second assumption, we have $\dist(S,T^{(n)}) >k$ for all but finitely many $n$. However, this implies that eventually, the clusters intersecting $S$ cannot intersect $T^{(n)}$, which, for the coupled cutting procedure, means that separation in $G^{(n)}$ must have occurred before time $x$. So,
 \[
  \bP\left[ |V(\text{Clusters intersecting } S)|\leq k \right] \leq 
  \bP\left[\mf S_c\big(G^{(n)}\big)< x\right].
 \]
 Taking the limit superior for $n\to\infty$ yields 
 \[
  \limsup_{n\to \infty} \bP\left[\mf S_c\big(G^{(n)}\big)\geq x\right] \leq 1-\bP\left[ |V(\text{Clusters intersecting } S)|\leq k \right],
 \]
 which implies the existence of the limit and $\leq$ in \eqref{eq:perc} after passing to the limit for $k\to\infty$ as well.
\end{proof}

\section{Visiting probability of subgraphs and size of the separation graph}

Consider a finite simple connected graph $G$ with $S,T\ssq V$ as usual. The aim of this section is to determine the probability that at some time $i\geq 1$, the cutting procedure $\Cut(G)$ will produce a specific subgraph $G_*$.

\begin{lemma} \label{lem:grpdfs}
 Fix an induced subgraph $G_*$ of $G$ with every component of $G_*$ containing at least one source node. Then, for all times $t\geq 0$ in the continuous-time cutting model, we have
 \begin{equation}\label{eq:pdfgraph}
  \bP\left[G_t \supseteq G_*\right]= e^{-|G_*|t} 
 \end{equation}
 and
 \begin{equation}\label{eq:pdfgraph2}
  \bP\left[G_t=G_* \right] = e^{-|G_*|t}\left(1-e^{-t}\right)^{|\partial_S G_*|}.
 \end{equation}
 Moreover, consider $v_*\in\partial_S G_*$. Then
 \begin{equation}\label{eq:pdfgraph3}
  \bP\left[G_{X_{v_*}} = G_* \mid X_{v_*} \right] = e^{-|G_*|X_{v_*}} \left(1-e^{-X_{v_*}}\right)^{|\partial_S G_*|-1}.
 \end{equation}
\end{lemma}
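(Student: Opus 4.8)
The plan is to reduce all three identities to a single structural description of the subgraph $G_t$ present at time $t$ in the continuous-time model, and then to compute using the independence of the $\Exp(1)$-clocks $(X_v)_{v\in V}$.

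First I would establish the following key lemma: for a fixed $t\ge 0$, writing $W_t:=\{v\in V:X_v>t\}$ for the set of vertices whose alarm has not yet rung, the vertex set $V(G_t)$ equals the union of those connected components of the induced subgraph $G[W_t]$ that contain at least one source. I would prove this by induction over the successive (almost surely distinct) cut times $0<s_1<s_2<\cdots$: between consecutive cuts $W_t$ is constant and nothing happens, while at a cut time $s_{j+1}$ one removes the vertex with that alarm time (if still present) and discards the newly source-less components; one checks that passing to the source-containing part commutes with deleting a single vertex, so the stated description is preserved.

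Granting this, \eqref{eq:pdfgraph} is immediate: since every component of $G_*$ contains a source, if $X_v>t$ for all $v\in V(G_*)$ then each component of $G_*$ lies inside a source-containing component of $G[W_t]$, whence $V(G_t)\supseteq V(G_*)$; the reverse implication is trivial, so $\{G_t\supseteq G_*\}=\bigcap_{v\in V(G_*)}\{X_v>t\}$, a set of probability $e^{-|G_*|t}$. For \eqref{eq:pdfgraph2} I would prove that $\{G_t=G_*\}$ coincides with $\bigcap_{v\in V(G_*)}\{X_v>t\}\cap\bigcap_{u\in\partial_S G_*}\{X_u\le t\}$. The inclusion "$\supseteq$" rests on two observations about the closure operator: every source lies in $\clos_S(G_*)=V(G_*)\cup\partial_S G_*$, and any path in $G$ from a vertex outside $\clos_S(G_*)$ into $G_*$ must meet $\partial_S G_*$; hence, once all of $\partial_S G_*$ has rung, the source-containing components of $G[W_t]$ are exactly the components of $G_*$. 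The inclusion "$\subseteq$" is the contrapositive statement: a vertex of $\partial_S G_*$ that has not yet rung is either itself a source or adjacent to the (then still present) $G_*$, so it lies in a source-containing component of $G[W_t]$ and would therefore appear in $G_t$. Since the clocks occurring here are mutually independent, \eqref{eq:pdfgraph2} follows by multiplying $|G_*|$ factors $e^{-t}$ with $|\partial_S G_*|$ factors $1-e^{-t}$.

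For \eqref{eq:pdfgraph3} I would apply the description of $\{G_t=G_*\}$ just obtained at the random time $t=X_{v_*}$ and condition on $X_{v_*}$. Because $v_*\in\partial_S G_*$, the constraint $X_{v_*}\le X_{v_*}$ is automatically satisfied, so the event reduces to $\bigcap_{v\in V(G_*)}\{X_v>X_{v_*}\}\cap\bigcap_{u\in\partial_S G_*\setminus\{v_*\}}\{X_u\le X_{v_*}\}$, which depends only on the clocks $X_v$ with $v\ne v_*$; these are independent of $X_{v_*}$, so conditioning on $X_{v_*}=s$ and multiplying $|G_*|$ factors $e^{-s}$ by $|\partial_S G_*|-1$ factors $1-e^{-s}$ gives \eqref{eq:pdfgraph3}. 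The only genuinely non-routine ingredient in this scheme is the structural lemma of the second paragraph, i.e.\ the assertion that the order in which vertices are cut is irrelevant for the final vertex set, so that repeatedly discarding source-less components reproduces exactly the source-containing part of $G[W_t]$; everything after that is bookkeeping with $\partial_S$ and independence, the one mild subtlety being that $\partial_S G_*$ may itself contain source vertices — which is precisely why $S$ is built into the definition of $\clos_S$.
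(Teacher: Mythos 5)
Your proposal is correct and follows essentially the same route as the paper: identify $\{G_t\supseteq G_*\}$ and $\{G_t=G_*\}$ with the corresponding clock events and use independence, then repeat at the random time $X_{v_*}$. The only difference is that you make explicit (via the structural lemma that $G_t$ is the source-containing part of $G[W_t]$) what the paper leaves implicit when asserting the event equivalences.
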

\begin{proof}
 The random subgraph $G_t$ contains $G_*$ if and only if at time $t$, none of the clocks $X_v$ for $v\in V(G_*)$ have rung yet and every component is still attached to a source node. Hence, as the $X_v$ are independent $\Exp(1)$-distributed random variables, we obtain
 \begin{align*}
  \bP\left[G_t \supseteq G_* \right]
  =\prod_{v\in V(G_*)} \bP[X_v \geq t]
  =e^{-|G_*|t}. 
 \end{align*}
 Similarly, $G_{t}=G_*$ if and only if at time $t$, none of the clocks for $v\in V(G_*)$, but all of the clocks for $v\in \partial_S G_*$ (this includes the nodes in $S$ which are not in $G_*$, by definition of $\partial_S$ in section 2) have rung, which yields \eqref{eq:pdfgraph2}. 
 
 For the final claim, note that at time $X_{v_*}$, all nodes in $G_*$ must still be intact, whereas all nodes in $\partial_S G_*\setminus\{v_*\}$ must have already been cut. Now proceed as in the case for a deterministic time.
\end{proof}

Of course, equations \eqref{eq:pdfgraph} and \eqref{eq:pdfgraph2} in the previous proposition are equivalent. One can easily obtain \eqref{eq:pdfgraph} from \eqref{eq:pdfgraph2} by summing over all graphs that contain $G_*$, and for the other direction, applying the inclusion-exclusion principle suffices.

\begin{cor}\label{cor:vispr}
 Fix an induced subgraph $G_*$ of $G$ with every component of $G_*$ containing at least one source node. Let $v_*\in\partial_S G_*$. Denote the $i$-th graph obtained in the cutting process by $G_i$ and the $i$-th cut node by $v_i$. Then
 \begin{equation}\label{eq:gvlast}
  \bP\left[\exists i\geq 1:G_i=G_*\text{ and }v_i=v_*\right]
  =\frac{1}{|\partial_S G_*|} \binom{|G_*|+|\partial_S G_*|}{|\partial_S G_*|}^{-1}
 \end{equation}
 and therefore
 \begin{equation}\label{eq:gstar}
  \bP\left[\exists i\geq 1:G_i=G_*\right] = \binom{|G_*|+|\partial_S G_*|}{|\partial_S G_*|}^{-1}.
 \end{equation}
\end{cor}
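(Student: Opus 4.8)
The plan is to derive \eqref{eq:gvlast} by integrating the conditional probability from \eqref{eq:pdfgraph3} against the density of $X_{v_*}$, and then to obtain \eqref{eq:gstar} by summing \eqref{eq:gvlast} over all admissible choices of $v_*$.

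First I would observe that the event $\{\exists i\geq 1: G_i = G_* \text{ and } v_i = v_*\}$ in the discrete-time model corresponds, in the continuous-time model $\Cut^c(G)$, exactly to the event that the cut performed when the clock $X_{v_*}$ rings produces the graph $G_*$; that is, $\{G_{X_{v_*}} = G_*\}$. Indeed, the discrete and continuous models produce the same ordered sequence of graphs, and the $i$-th cut node is $v_*$ precisely when $G_*$ is realised at the moment $X_{v_*}$ fires. Hence, using the tower property and Lemma~\ref{lem:grpdfs}, \eqref{eq:pdfgraph3},
\begin{equation*}
 \bP\left[\exists i\geq 1:G_i=G_*\text{ and }v_i=v_*\right]
 = \bE\left[\bP\left[G_{X_{v_*}}=G_* \mid X_{v_*}\right]\right]
 = \int_0^\infty e^{-|G_*|t}\left(1-e^{-t}\right)^{|\partial_S G_*|-1} e^{-t}\,\Di t.
\end{equation*}
Substituting $u = e^{-t}$ (so $\Di u = -e^{-t}\,\Di t$) turns this into $\int_0^1 u^{|G_*|}(1-u)^{|\partial_S G_*|-1}\,\Di u = \Beta(|G_*|+1, |\partial_S G_*|)$, and the Beta-function identity from Section~2, namely $\Beta(n+1,m+1) = \frac{1}{m+1}\binom{n+m+1}{m+1}^{-1}$ applied with $n = |G_*|$ and $m = |\partial_S G_*|-1$, yields exactly the right-hand side of \eqref{eq:gvlast}.

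For \eqref{eq:gstar}, I would note that the events $\{\exists i\geq 1: G_i = G_* \text{ and } v_i = v_*\}$ over the $|\partial_S G_*|$ choices of $v_* \in \partial_S G_*$ are disjoint (a given realisation of $G_*$ in the sequence arises from exactly one cut node, and once $G_*$ has been realised it cannot be realised again since the process is strictly decreasing), and their union is precisely $\{\exists i\geq 1: G_i = G_*\}$. Summing \eqref{eq:gvlast} over these $|\partial_S G_*|$ choices cancels the factor $\frac{1}{|\partial_S G_*|}$ and gives \eqref{eq:gstar}.

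The only genuine subtlety — rather than an obstacle — is the identification of the discrete event with the continuous one: one must be careful that $v_*$ lying in $\partial_S G_*$ is exactly the condition ensuring that the cut at time $X_{v_*}$ can produce $G_*$ (a cut at a vertex not neighbouring $G_*$ and not in $S$ would leave a strictly larger graph, and a vertex of $G_*$ itself would destroy it), and that no two distinct rings of $X_{v_*}$ occur. Both points are immediate from the monotone, piecewise-constant nature of $\Cut^c(G)$ together with the definition of $\partial_S$. Everything else is the routine Beta-integral computation above.
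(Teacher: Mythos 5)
Your proposal is correct and follows essentially the same route as the paper: translate the discrete event to $\{G_{X_{v_*}}=G_*\}$ in the continuous-time model, integrate \eqref{eq:pdfgraph3} against the $\Exp(1)$ density, substitute $u=e^{-t}$ to get the Beta integral, and sum the disjoint events over $v_*\in\partial_S G_*$. The Beta-function bookkeeping and the disjointness argument both match the paper's proof.
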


\begin{proof}
 We prove the result by considering the continuous-time model instead. There, the event $B(v_*):=\{\exists i\geq1:G_i=G_*\text{ and }v_i=v_*\}$ translates to the event $\{G_{X_{v_*}}=G_*\}$. Hence, we obtain the first result from integrating over $X_{v_*}\sim \Exp(1)$ in \eqref{eq:pdfgraph3} by substituting $e^{-x}=u$:
 \begin{align*}
 \bP[B(v_*)]
  &= \int_0^\infty \left(e^{-x}\right)^{|G_*|}\left(1-e^{-x}\right)^{|\partial_S G_*|-1} \cdot e^{-x} \Di x \\
  &= \int_0^1 u^{|G_*|} \left(1-u\right)^{|\partial_S G_*|-1} \Di u\\
  &=\Beta(|G_*|+1,|\partial_S G_*|) \\
  &=\frac{1}{|\partial_S G_*|}\binom{|G_*|+|\partial_S G_*|}{|\partial_S G_*|}^{-1},
 \end{align*}
 as required. Finally, the observation that
 \[
  \{\exists i\geq 1:G_i=G_*\}=\biguplus_{v_*\in\partial G_*} B(v_*)
 \]
 lets us obtain \eqref{eq:gstar} from \eqref{eq:gvlast}.
\end{proof}

\begin{defn}\label{defn:admissable}
 Let $G$ be a finite connected graph equipped with sources $S$ and targets $T\neq\emptyset$. An induced subgraph $G_*$ is called \emph{admissible} if the probability $\bP[G_{\mf S}=G_*]$ is positive. We denote by $\msc A_m(G)$ the set of all admissible subgraphs $G_*$ of $G$ of size $|V(G_*)|=m$. 
\end{defn}

\begin{lemma}\label{lem:admissable}
 Assume $T\neq \emptyset$. An induced subgraph $G_*\ssq G$ is admissible iff $G_*$ contains no target nodes and every component of $G_*$ contains at least one source node.
\end{lemma}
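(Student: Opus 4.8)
I would prove both implications; the forward direction (``only if'') is straightforward. If $\bP[G_{\mf S}=G_*]>0$ then, since $\mf S$ is by definition the first time the remaining graph contains no target, $G_*=G_{\mf S}$ contains no target; and every graph produced by $\Cut(G)$ has all of its components containing a source, since source-free components are discarded at each step (in the degenerate case $S=\emptyset$ one has $G_{\mf S}=\emptyset$, which also satisfies both conditions). Hence $G_*$ has the two stated properties.

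For ``if'', assume $G_*$ is target-free with every component containing a source; the plan is to exhibit a positive-probability trajectory of $\Cut(G)$ on which $G_{\mf S}=G_*$. If $S=\emptyset$ then $G_*=\emptyset$ and cutting any vertex first already does it, so assume $S\neq\emptyset$. Fix $t_0\in T$ and let $P=(t_0=u_0,u_1,\dots,u_m)$ be a shortest path in $G$ from $t_0$ to the nonempty set $S\cup V(G_*)$, so that $u_0,\dots,u_{m-1}\notin S\cup V(G_*)$, $u_m\in S\cup V(G_*)$, and --- crucially, by minimality --- none of $u_0,\dots,u_{m-2}$ is adjacent to any vertex of $S\cup V(G_*)$. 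Set $H:=G_*\cup\{u_0,\dots,u_{m-1}\}$ if $u_m\in V(G_*)$, and $H:=G_*\cup\{u_0,\dots,u_m\}$ otherwise. Then every component of $H$ contains a source --- the one carrying $P$ being attached, via $u_{m-1}\sim u_m$ or via $u_m\in S$, to a source --- so $H$ can occur as a state of $\Cut(G)$; moreover $t_0\in V(H)$, and $V(H)\setminus V(G_*)$ induces a chordless path, no vertex of which is a source except possibly $u_m$.

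Next I would show that with positive probability $\Cut(G)$ reaches $H$ with every intermediate graph still containing $t_0$: running the chain and always cutting a vertex outside $V(H)$ while the current graph strictly contains $H$, no vertex of $V(H)$ is ever discarded (each component of $H$ contains a source and stays connected), so $V(H)\subseteq V(G_i)$, and in particular $t_0\in V(G_i)$, throughout; after at most $|V(G)|-|V(H)|$ such cuts --- each of probability at least $1/|V(G)|$ --- the chain is at $H$. From $H$, at most two prescribed further cuts land on $G_*$: removing $u_{m-1}$ (when $u_m\in V(G_*)$), or removing $u_m$ and then, if still necessary, $u_{m-1}$ (when $u_m\in S\setminus V(G_*)$), leaves the tail $\{u_0,\dots,u_{m-2}\}$ as a component containing no source, which is therefore discarded, so the state becomes exactly $G_*$. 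Throughout these one or two steps the current graph still contains the target $t_0$, while $G_*$ does not; hence $G_*$ is the first target-free graph of this trajectory, i.e.\ $G_{\mf S}=G_*$, and since the whole trajectory has positive probability, $G_*$ is admissible.

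The main obstacle is the endgame of the ``if'' direction: one must arrange the final cut(s) so that the last target disappears exactly when $G_*$ appears, and not before. Taking $P$ to be a shortest path to $S\cup V(G_*)$ is precisely what makes this work --- it forces the tail $u_0,\dots,u_{m-2}$ to break off as a source-free (hence discarded) component at the right step, while the same minimality keeps $t_0$ present in every earlier graph. The positive-probability reachability of $H$ is routine and parallels the computation behind Corollary~\ref{cor:vispr}.
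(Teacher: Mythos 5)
Your proof is correct, but the sufficiency direction takes a genuinely different route from the paper's. The paper never leaves the boundary of $G_*$: it orders $\partial_S G_*=\{v_1,\dots,v_r\}$ so that removing $v_1,\dots,v_{r-1}$ still leaves an $S$--$T$ path (such an ordering exists by following any $S$--$T$ path to its last boundary vertex), and then the single cut sequence $v_1,\dots,v_r$ already realises $\{G_{\mf S}=G_*\}$, because everything outside $\clos_S(G_*)$ is disposed of for free by the discarding rule; this gives the clean bound $\bP[G_{\mf S}=G_*]\geq\prod_{i=1}^r(n+1-i)^{-1}$ in only $r$ steps. You instead first cut your way down to the intermediate state $H=G_*\cup(\text{geodesic from a target }t_0\text{ to }S\cup V(G_*))$ and only then finish with one or two prescribed cuts. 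The cost is a longer trajectory and the end-of-game case analysis (which you handle correctly, including the case where $u_{m-1}$ happens to be adjacent to $V(G_*)$); the benefit is that the requirement ``the penultimate graph still contains a target'' is enforced very transparently by keeping $t_0$ alive until the last moment, whereas the paper's existence claim for the ordering of the boundary is stated rather tersely. Both arguments rest on the same two observations: admissibility needs only one positive-probability trajectory, and source-free components vanish automatically.
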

\begin{proof}
 That the stated conditions for $G_*$ are necessary for $G_*$ being admissible is evident from the definitions of the cutting procedure and the separation time. To show that they are also sufficient, label the vertices in $\partial_S G_*$ by $v_1,...,v_r$ in such a way that after removing $v_1,...,v_{r-1}$ from $G$, the remaining graph still contains a path connecting $S$ and $T$ (such a boundary vertex exists since there must be at least one such path in $G$, and it cannot be contained entirely in $G_*$ by assumption). Then, one way of realising the event $\{G_{\mf S}=G_*\}$ is by removing the vertices $v_1,...,v_r$ in this order. Hence, assuming $|V(G)|=n$:
 \[
  \bP[G_{\mf S}=G_*] \geq \prod_{i=1}^{r} \frac{1}{n+1-i} > 0 
 \]
 thus concluding the proof.
\end{proof}

Using similar methods as in the proofs to Lemma \ref{lem:grpdfs} and Corollary \ref{cor:vispr}, we can establish the following connection between the graph $G_{\mf S}$ at separation and the continuous-time separation number $\mf S_c$:

\begin{prop}\label{prop:sepgr}
 Let $G_*$ be an admissible subgraph of $G$. Then,
 \begin{equation}\label{eq:pdfsepgr}
  \bP[G_{\mf S}=G_*] = \sum_{v_*\in \partial_S G_*}
  \int_0^1 u^{|G_*|-1}(1-u)^{|\partial_S G_*|-1}  \bP\left[\mf S_c(H[v_*]) \geq -\ln u \right]\Di u,
 \end{equation}
 where $H[v_*]$ denotes the graph obtained from $G$ in the following way: Remove all vertices in $G_*$ and in $\partial_S G_* \setminus\{v_*\}$ from $G$, and from what remains, let $H[v_*]$ be the connected component containing $v_*$. We endow $H[v_*]$ with source node $v_*$ and target nodes $T\cap V(H[v_*])$.
\end{prop}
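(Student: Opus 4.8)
The plan is to condition on which boundary vertex of $G_*$ is removed last before separation and on the time it is removed, then decompose the event $\{G_{\mf S}=G_*\}$ accordingly. Write $\{G_{\mf S}=G_*\}$ as the disjoint union over $v_*\in\partial_S G_*$ of the events $A(v_*):=\{G_{\mf S}=G_*\text{ and }v_{\mf S}=v_*\}$. For $A(v_*)$ to occur, the following must happen at time $t=X_{v_*}$: every vertex of $G_*$ is still present (clocks not yet rung), every vertex of $\partial_S G_*\setminus\{v_*\}$ has already been removed, and — crucially — separation has not yet occurred in $G$. Since removing $G_*$ together with $\partial_S G_*\setminus\{v_*\}$ disconnects everything except possibly via $v_*$, and since $v_*$ itself is still present at time $t^-$, the only way a source–target path can survive in $G_t$ just before the cut at $v_*$ is through the component $H[v_*]$ defined in the statement; moreover $G_*$ itself contains no targets. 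So, conditionally on $X_{v_*}=t$ and on the clocks of $G_*\cup(\partial_S G_*\setminus\{v_*\})$ having the required status, the event ``separation has not happened strictly before the cut at $v_*$'' is exactly the event that the continuous-time cutting procedure restricted to $H[v_*]$ (with source $v_*$ and targets $T\cap V(H[v_*])$) has separation time $\geq t$. The memorylessness of the exponential clocks, together with their independence, is what makes this conditional decoupling legitimate: the clocks inside $H[v_*]$ are independent of $X_{v_*}$ and of the clocks in $G_*\cup\partial_S G_*$, and conditioning on $\{X_v\geq t\ \forall v\in V(G_*)\}$ does not affect the law of the $H[v_*]$-clocks.

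Concretely, I would fix $v_*$ and compute $\bP[A(v_*)\mid X_{v_*}=t]$ as a product of three factors: $\prod_{v\in V(G_*)}\bP[X_v\geq t]=e^{-|G_*|t}$ for the vertices of $G_*$ surviving; $\prod_{v\in\partial_S G_*\setminus\{v_*\}}\bP[X_v< t]=(1-e^{-t})^{|\partial_S G_*|-1}$ for the other boundary vertices having already rung (exactly as in \eqref{eq:pdfgraph3} of Lemma~\ref{lem:grpdfs}); and $\bP[\mf S_c(H[v_*])\geq t]$ for no source–target path surviving in the relevant component up to time $t$. Then integrate against the $\Exp(1)$-density of $X_{v_*}$:
\[
 \bP[A(v_*)] = \int_0^\infty e^{-|G_*|t}(1-e^{-t})^{|\partial_S G_*|-1}\,\bP\left[\mf S_c(H[v_*])\geq t\right]\,e^{-t}\Di t,
\]
and substitute $u=e^{-t}$, $\Di u = -e^{-t}\Di t$, which turns $e^{-|G_*|t}\cdot e^{-t}\Di t$ into $u^{|G_*|}\,\frac{\Di u}{u}=u^{|G_*|-1}\Di u$ and $(1-e^{-t})^{|\partial_S G_*|-1}$ into $(1-u)^{|\partial_S G_*|-1}$, and $t\geq 0$ into $u\in(0,1]$, with $\bP[\mf S_c(H[v_*])\geq t]$ becoming $\bP[\mf S_c(H[v_*])\geq -\ln u]$. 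Summing over $v_*\in\partial_S G_*$ gives exactly \eqref{eq:pdfsepgr}.

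The one point that needs genuine care — and which I expect to be the main obstacle to state cleanly rather than technically hard — is justifying that, on the event that the $G_*$-clocks and $(\partial_S G_*\setminus\{v_*\})$-clocks are in the prescribed configuration at time $t^-$, the ``separation has not yet occurred'' event is governed \emph{solely} by $H[v_*]$ and, as a subprocess, has precisely the law of $\Cut^c(H[v_*])$. This requires checking two things: first, that when all of $G_*$ and all of $\partial_S G_*\setminus\{v_*\}$ are removed, the part of $G$ that can still carry a source–target path is contained in $H[v_*]$ — here one uses that $\partial_S G_*$ separates $G_*$ from the rest and that a path realizing essentiality through the non-$G_*$ region must exit $G_*$ through $\partial_S G_*$, hence through $v_*$ once the others are gone, so it lies in the component of $v_*$; and second, that while some vertices outside $H[v_*]$ and outside $G_*$ may still be present before time $t$, they never reattach a target to a source, so they are irrelevant to the separation time — combined with the fact that discarded components in $\Cut^c(G)$ restricted to $H[v_*]$ evolve exactly as in $\Cut^c(H[v_*])$ because $v_*$ plays the role of the source there. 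Once this structural observation is recorded, the probabilistic computation above is routine, paralleling Lemma~\ref{lem:grpdfs} and Corollary~\ref{cor:vispr}.
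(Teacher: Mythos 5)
Your overall strategy is the same as the paper's: decompose $\{G_{\mf S}=G_*\}$ over the last boundary vertex $v_*$, condition on $X_{v_*}=t$, factor the conditional probability into the $G_*$/boundary part and the ``separation has not yet occurred in $H[v_*]$'' part, integrate against the $\Exp(1)$ density, and substitute $u=e^{-t}$. The structural observations about why only $H[v_*]$ matters are also essentially those in the paper. However, there is a genuine error in the probabilistic factorisation. You assert that ``the clocks inside $H[v_*]$ are independent of $X_{v_*}$,'' but $v_*$ is itself a vertex of $H[v_*]$ --- indeed it is its source --- so $\mf S_c(H[v_*])$ depends on $X_{v_*}$; in fact $\mf S_c(H[v_*])\leq X_{v_*}$ always, so the unconditional event $\{\mf S_c(H[v_*])\geq t\}$ entails $\{X_{v_*}\geq t\}$. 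The event you actually need, namely that a path from $v_*$ to a target survives in $G_{t^-}$ given $X_{v_*}=t$, concerns only the clocks of $H[v_*]\setminus\{v_*\}$, and its probability is
\[
 \bP\bigl[\mf S_c(H[v_*])\geq t \mid X_{v_*}=t\bigr]
 =\frac{\bP\bigl[\mf S_c(H[v_*])\geq t\bigr]}{\bP[X_{v_*}\geq t]}
 =\frac{\bP\bigl[\mf S_c(H[v_*])\geq t\bigr]}{e^{-t}},
\]
not $\bP[\mf S_c(H[v_*])\geq t]$. Consequently your displayed intermediate identity for $\bP[A(v_*)]$ is false: for the path $s\text{--}t$ with $S=\{s\}$, $T=\{t\}$ and $G_*=\{s\}$ it evaluates to $\int_0^\infty e^{-3t}\Di t=1/3$, whereas the true value is $1/2$.

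You nevertheless land on the correct final formula because of a compensating algebra slip in the change of variables: with $u=e^{-t}$ one has $e^{-|G_*|t}\,e^{-t}\Di t=-u^{|G_*|}\Di u$, which after reversing the orientation gives $u^{|G_*|}\Di u$, not $u^{|G_*|-1}\Di u$ as you claim. The missing factor $1/e^{-t}$ from the conditioning and the spurious factor $1/u$ from the substitution cancel exactly. As written, the argument therefore passes through a wrong identity and is not a valid proof; repairing it amounts to inserting the division by $e^{-t}$ exactly as the paper does and then carrying out the substitution correctly.
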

\begin{proof}
 Fix a vertex $v_*\in\partial_S G_*$, and assume that this is the last node to be removed for separation to occur.
 We observe first that by definition of the separation number, any graphs obtained by $\Cut(G)$ before separation must have contained a path from $v_*$ to $T$. In particular, the last graph before separation occurred contained such a path, which additionally was not passing through any other nodes in $G_*$ or $\partial_S G_*$ and must have therefore been contained in $H[v_*]$. The existence of such a path means, however, that the graph $H[v_*]$ is not yet separated. Thus, by transitioning from the discrete to the continuous-time model, we obtain
 \begin{align}\label{eq:condsepgr}
  &\bP[G_{\mf S}=G_*, v_{\mf S}=v_* \mid X_{v_*}]\notag\\
  &\qquad= \bP\left[G_{X_{v_*}}=G_* \text{ and $v_*$ connects to $T$ in $G_{X_{v_*}^-}$}\mid X_{v_*}\right]\notag\\
  &\qquad= \bP\left[G_{X_{v_*}}=G_*\mid X_{v_*}\right] \bP\left[\mf S_c(H[v_*])\geq X_{v_*}\mid X_{v_*} \right],
 \end{align}
 where conditional independence holds true because 
 \[
  \{G_{X_{v_*}}=G_*\}\ \text{ and }\ \{\mf S_c(H[v_*])\geq X_{v_*}\}
 \] 
 are events on vertex sets which only share $v_*$. Conditioned on $X_{v_*}$ being $x$, the event $\{\mf S_c(H[v_*])\geq X_{v_*}\}$ amounts to the existence of a path from $v_*$ to the set of target nodes in $H[v_*]$, none of whose clocks have rung yet at time $X_{v_*}$. On the other hand, without the conditioning, the same event is equivalent to the existence of a path from a neighbour of $v_*$ to the set of target nodes in $H[v_*]$. Hence,
 \[
  \bP\left[\mf S_c(H[v_*])\geq X_{v_*}\mid X_{v_*}=x \right] = \frac{\bP\left[\mf S_c(H[v_*])\geq x \right]}{e^{-x}}.
 \]
 In light of \eqref{eq:pdfgraph3} from Lemma \ref{lem:grpdfs}, we can now rewrite equation \eqref{eq:condsepgr} as
 \begin{multline}\label{eq:sepgrpr}
  \bP[G_{\mf S}=G_*,v_{\mf S}=v_*\mid X_{v_*}=x]\\
   = e^{-|G_*|x} \left(1-e^{-x}\right)^{|\partial_S G_*|-1} \frac{\bP\left[\mf S_c(H[v_*])\geq x \right]}{e^{-x}}.
 \end{multline}
 Finally, observe that, with $\mu_X$ denoting the distribution of $X_{v_*}$,
 \begin{align*}
  \bP[G_{\mf S}=G_*] = \sum_{v_*\in\partial_S G_*} \int_0^\infty \bP[G_{\mf S}=G_*, v_{\mf S}=v_*\mid X_{v_*}=x] \Di \mu_X(x),
 \end{align*}
 so that, after plugging in the expression from \eqref{eq:sepgrpr} and using $X_{v_*}\sim \Exp(1)$, we obtain
 \begin{multline*}
  \bP[G_{\mf S}=G_*]\\
  = \sum_{v_*\in\partial_S G_*} \int_0^\infty (e^{-x})^{|G_*|} 
    \left(1-e^{-x}\right)^{|\partial_S G_*|-1} 
    \frac{\bP\left[\mf S_c(H[v_*])\geq x \right]}{e^{-x}}\cdot e^{-x}\Di x,
 \end{multline*}
 which only differs from \eqref{eq:pdfsepgr} by the substitution $e^{-x}=u$.
\end{proof}

\begin{rem}\label{rem:equality}
 The (unconditioned) probability $\bP[\mf S_c(H[v_*])\geq X_{v_*}]$,  has a number of equivalent versions. Indeed, if we consider any $G$ with $S=\{v_*\}$, then we have the following equalities:
 \begin{align*}
  \bP[\mf S_c(G)\geq X_{v_*}] 
  &= \bP[\mf S_c(G)\geq \mf C_c(G)] 
   = \bP[\mf S(G)\geq \mf C(G)]\\
  &= \bP[G_{\mf S}=\emptyset] 
   = \bP[|G_{\mf S}|=0].
 \end{align*}
 Moreover, in the first three formulations, the strict inequality ``$>$'' is impossible, so one could just as well write ``$=$'' there. 
 
 Additionally, since $\mf S_c(H[v_*])\leq X_{v_*}$, we have the estimate 
 \begin{equation}\label{eq:est1}
  \bP\left[\mf S_c(H[v_*]) \geq -\ln u \right]\leq \bP\left[X_{v_*}\geq -\ln u\right] = u \qquad \forall u\in[0,1].
 \end{equation}
\end{rem}

Proposition~\ref{prop:sepgr} enables us to make the upper bound $\mf S(K_V)$ from \eqref{eq:bounds} more explicit:
\begin{cor}\label{cor:complete}
 Let $K_V$ be a complete graph on a vertex set $V$ of cardinality $|V|=n$ with sources $S$ and targets $T$. Then,
 \begin{equation}\label{eq:complsize}
  \bP\left[\left|K_{V,\mf S}\right|=m\right] = \frac{|T|}{m+1}\binom{n}{m+1}^{-1} \left[\binom{n-|T|}{m}-\binom{n-|S\cup T|}{m}\right],
 \end{equation}
 for $m=1,\dots,n$. Furthermore, for times $t=1,\dots,n-1$,
 \begin{multline}\label{eq:compltime}
  \bP[\mf S(K_V)=t] = \frac{1}{n-t+1}\binom{n}{n-t+1}^{-1}\\ \cdot\left[|T|\binom{n-|T|}{n-t} + |S|\binom{n-|S|}{n-t} - |S\cup T|\binom{n-|S\cup T|}{n-t}\right].
 \end{multline}
\end{cor}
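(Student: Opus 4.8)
The plan is to exploit the special structure of $K_V$: deleting any vertex leaves a complete, hence connected, graph, so no component is ever discarded until the last source disappears. Thus if $v_1,\dots,v_n$ denotes the (uniformly random) order in which the vertices are chosen, then $G_i=V\setminus\{v_1,\dots,v_i\}$ for $0\le i<\mf C(K_V)$, while $\mf C(K_V)=L_S$, the position of the last source in this permutation, with $G_{L_S}=\emptyset$. Writing also $L_T$ for the position of the last target, the first thing I would record is that $V(G_i)\cap T=\emptyset$ holds iff $i\ge L_T$ or $i\ge L_S$, so that
\[
 \mf S(K_V)=\min(L_S,L_T),
\]
and moreover $G_{\mf S}=V\setminus\{v_1,\dots,v_{L_T}\}$ when $L_T<L_S$, while $G_{\mf S}=\emptyset$ when $L_T\ge L_S$; in particular, for $m\ge1$,
\[
 \{|K_{V,\mf S}|=m\}=\{L_T=n-m\}\cap\{L_S>n-m\}.
\]

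For \eqref{eq:complsize} I would condition on the $(m{+}1)$-element set $B$ of vertices occupying the last $m+1$ positions $n-m,\dots,n$; here $B$ is a uniformly random $(m{+}1)$-subset of $V$ with uniform internal order, and the event $\{L_T=n-m\}\cap\{L_S>n-m\}$ occurs exactly when $B$ contains precisely one target, that target sits at position $n-m$, and $B$ with that target removed still contains a source. Given $B$ with $|B\cap T|=1$, its unique target lies at position $n-m$ with conditional probability $1/(m+1)$, so it remains only to count the admissible $B$: choose the target ($|T|$ choices) and then $m$ further vertices from $V\setminus T$, at least one of them in $S$, which can be done in $\binom{n-|T|}{m}-\binom{n-|S\cup T|}{m}$ ways (subtracting the $m$-subsets entirely inside $V\setminus(S\cup T)$). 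Multiplying by $\binom{n}{m+1}^{-1}$ and $1/(m+1)$ yields \eqref{eq:complsize}. (The same formula also falls out of Proposition~\ref{prop:sepgr}: the admissible subgraphs of $K_V$ of size $m$ are the $K_W$ with $W\subseteq V\setminus T$, $|W|=m$, $W\cap S\ne\emptyset$; for these $\partial_S W=V\setminus W$, the graph $H[v_*]$ is a single vertex contributing only when $v_*\in T$, where $\mf S_c(H[v_*])\sim\Exp(1)$, so the integral becomes a Beta function independent of $W$, leaving the count of admissible $W$, which is again $\binom{n-|T|}{m}-\binom{n-|S\cup T|}{m}$.)

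For \eqref{eq:compltime}, since $\mf S(K_V)=\min(L_S,L_T)$, inclusion--exclusion gives
\[
 \bP[\mf S(K_V)\le t]=\bP[L_T\le t]+\bP[L_S\le t]-\bP[L_T\le t,\,L_S\le t].
\]
Now $\bP[L_T\le t]$ is the probability that all $|T|$ targets lie among the first $t$ positions, i.e.\ $\binom{t}{|T|}\binom{n}{|T|}^{-1}$; likewise $\bP[L_S\le t]=\binom{t}{|S|}\binom{n}{|S|}^{-1}$, and $\bP[L_T\le t,L_S\le t]$ is the probability that all of $S\cup T$ lies in the first $t$ positions, namely $\binom{t}{|S\cup T|}\binom{n}{|S\cup T|}^{-1}$. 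Taking the difference $\bP[\mf S=t]=\bP[\mf S\le t]-\bP[\mf S\le t-1]$ and using $\binom{t}{k}-\binom{t-1}{k}=\binom{t-1}{k-1}$ gives
\[
 \bP[\mf S(K_V)=t]=\frac{\binom{t-1}{|T|-1}}{\binom{n}{|T|}}+\frac{\binom{t-1}{|S|-1}}{\binom{n}{|S|}}-\frac{\binom{t-1}{|S\cup T|-1}}{\binom{n}{|S\cup T|}},
\]
and finally I would rewrite each summand by the elementary identity $\binom{t-1}{k-1}\binom{n}{k}^{-1}=\frac{k}{n-t+1}\binom{n}{n-t+1}^{-1}\binom{n-k}{n-t}$ (both sides equal $\frac{k\,(t-1)!\,(n-k)!}{n!\,(t-k)!}$, which is also immediate from the Beta-function identities recalled in Section~2), putting the expression into the shape of \eqref{eq:compltime}.

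I expect the only genuinely delicate point to be the bookkeeping behind the two displays in the first paragraph: $|K_{V,\mf S}|$ is governed by the \emph{strict} event $\{L_T<L_S\}$ together with the value of $L_T$, whereas $\mf S(K_V)$ itself is $\min(L_S,L_T)$, so the two formulas are not merely restatements of one another, and the boundary case $L_T=L_S$ (last target $=$ last source) has to be assigned to $|K_{V,\mf S}|=0$. Once the reductions are set up correctly, everything that remains is routine counting and a binomial identity.
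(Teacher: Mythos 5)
Your proof is correct, but it takes a genuinely different route from the paper's. You exploit the fact that on $K_V$ the cutting procedure is nothing but a uniform random permutation $v_1,\dots,v_n$ (truncated at the last source), so that $\mf S(K_V)=\min(L_S,L_T)$ and $\{|K_{V,\mf S}|=m\}=\{L_T=n-m\}\cap\{L_S>n-m\}$; both formulas then follow from elementary counting (conditioning on the set occupying the last $m+1$ positions, respectively inclusion--exclusion on the distribution functions of $L_S$ and $L_T$ followed by a binomial identity). The paper instead derives \eqref{eq:complsize} as a direct application of Proposition~\ref{prop:sepgr} (summing the Beta integral over the admissible subgraphs, exactly as in your parenthetical remark), and obtains \eqref{eq:compltime} by splitting $\{\mf S(K_V)=t\}$ according to whether $K_{V,\mf S}$ is empty or not, handling the empty case via the visiting probability \eqref{eq:gstar} of Corollary~\ref{cor:vispr} applied to the subgraphs with exactly one source and at least one target; one can check that your $|T|+|S\setminus T|=|S\cup T|$ bookkeeping reproduces the paper's recombination of the two cases. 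Your argument is more self-contained and arguably more transparent (it identifies $\mf S(K_V)$ as a min of two ``last record'' positions), whereas the paper's serves its expository purpose of illustrating the general machinery of Section~4 on the simplest possible example. You also correctly isolate the one delicate point, namely that the boundary case $L_T\geq L_S$ belongs to $\{|K_{V,\mf S}|=0\}$, which is why \eqref{eq:complsize} is stated only for $m\geq 1$.
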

\begin{proof}
 According to Proposition~\ref{prop:sepgr}, we have 
 \begin{multline*}
  \bP[|K_{V,\mf S}|=m]\\ 
  = \sum_{G_*\in \msc A_m(K_V)} \sum_{v_*\in\partial_S G_*} \int_0^1 u^{m-1}(1-u)^{|\partial_S G_*|-1} \bP[\mf S_c(H[v_*])\geq -\ln u]\,\Di u.
 \end{multline*}
 For $m\geq 1$, this can be evaluated: $\partial_S G_*$ always consists of all the vertices not in $G_*$, so $|\partial_S G_*|=n-m$, and since the graphs $H[v_*]$ only contain the vertex $v_*$, we obtain
 \[
  \bP[\mf S_c(H[v_*])\geq -\ln u]=\begin{cases} 0 & \text{ if } v_*\notin T \\ u & \text{ if } v_*\in T \end{cases}.
 \]
 Since at separation all targets must have been removed, this yields
 \begin{align*}
  \bP[|K_{V,\mf S}|=m] &= \sum_{G_*\in\msc A_m(K_V)} |T|\int_0^1 u^m(1-u)^{n-m-1}\,\Di u \\
  &= |\msc A_m(K_V)| \cdot |T| \cdot B(m+1,n-m).
 \end{align*}
 Finally, by Lemma~\ref{lem:admissable}, the graphs in $\msc A_m(K_V)$ consist of $m$ vertices, none of which are targets, unless the $m$ vertices are chosen from $V\setminus(S\cup T)$. Hence 
 \[
  |\msc A_m(K_V)| = \binom{n-|T|}{m}-\binom{n-|S\cup T|}{m}
 \]
 and \eqref{eq:complsize} follows. 
 
 For the second part, we first show that
 \begin{equation}\label{eq:compltime2}
  \bP[\mf S(K_V)=t] = \bP[|K_{V,\mf S}|=n-t] + \frac{g}{n-t+1}\binom{n}{t-1}^{-1}
 \end{equation}
 for some explicit constant $g$.
 Observe that separation at time $t$ must occur after cutting $t$ vertices, either leaving behind some non-empty graph $K_{V,\mf S}$ (in which case $|K_{V,\mf S}|=n-t$, as the vertices are removed one at a time on $K_V$ -- this case is thus covered by the first summand on the right-hand side of \eqref{eq:compltime2}), or leaving behind an empty graph $K_{V,\mf S}$. This second case occurs if and only if $K_{V,t-1}$ contains exactly one source (which will then have to be cut at time $t$) and at least one target. For the sake of this proof, denote the set of these subgraphs by $\msc G_t$. Hence
 \[
  \bP[\mf S(K_V)=t, K_{V,\mf S}=\emptyset] 
  = \sum_{G_*\in \msc G_t} \frac{\bP[K_{V,t-1}=G_*]}{|G_*|}.
 \]
 Observe that all $G_*\in\msc G_t$ have exactly $n-t+1$ vertices, and that if such a graph is obtained in the cutting procedure, it has to happen at time $t-1$. Thus, invoking equation \eqref{eq:gstar} from Corollary~\ref{cor:vispr}, we obtain
 \[
  \bP[K_{V,t-1}=G_*] = \bP[\exists i: K_{V,i}=G_*] = \binom{n}{t-1}^{-1}
 \]
 and therefore 
 \[
  \bP[\mf S(K_V)=t, K_{V,\mf S}=\emptyset] = \frac{|\msc G_t|}{n-t+1}\binom{n}{t-1}^{-1}.
 \]
 This accounts for the second summand in \eqref{eq:compltime2}, with  $g=|\msc G_t|$. To obtain equation \eqref{eq:compltime}, we note that for building an element of $\msc G_t$, one needs to choose one source vertex and $n-t$ non-source vertices, unless neither of these vertices are targets. Hence, 
 \begin{equation}\label{eq:sizeGt}
  |\msc G_t|=|S|\binom{n-|S|}{n-t} - |S\setminus T|\binom{n-|S\cup T|}{n-t}. 
 \end{equation}
 Now, plugging in equations \eqref{eq:complsize} and \eqref{eq:sizeGt} into equation \eqref{eq:compltime2} yields \eqref{eq:compltime} after some minor simplifications. 
\end{proof}

Recall that a family of real-valued random variables $X_i, i\in I$, is \emph{tight} if for all $\gep>0$, there exists a constant $M$ such that $\bP[|X_i|\geq M]<\gep$ for all $i\in I$, cf. \cite[p.37]{billingsley1968convergence}.
We conclude this section by showing the following tightness result about the size of $G_{\mf S}$:

\begin{thm}\label{thm:tight}
 Let $(G^{(n)})_{n\geq1}$ be a sequence of finite induced subgraphs exhausting the locally finite, infinite graph $G$ equipped with subsets $S,T\ssq V(G)$. Define $a_{m,n} = \sum_{A\in \msc A_m(G^{(n)})} |\partial_S A|$ and let $a_m:= \lim_{n\to\infty} a_{m,n}$. Assume that there are constants $b>0$ and $L\geq0$ such that 
 \begin{enumerate}[(i)]
  \item $|\partial_S A|\geq bm+1$ for all $A\in\msc A_m(G^{(n)})$ and all $m\geq L$, and 
  \item the function $f(x)=\sum_{m=L}^\infty a_m x^m$ has radius of convergence at least $\frac{b^b}{(b+1)^{b+1}}$ and satisfies
  \begin{equation}\label{eq:integrable}
   \int_0^1 f(x(1-x)^b)\Di x <\infty
  \end{equation}
 \end{enumerate}
 Then the sizes of the separation graphs, $\big|G^{(n)}_{\mf S}\big|$, form a tight family of random variables.
\end{thm}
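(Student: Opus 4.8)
The plan is to produce, for every $M$, an upper bound for $\bP[|G^{(n)}_{\mf S}|\ge M]$ that does not depend on $n$ and tends to $0$ as $M\to\infty$; this is exactly tightness. The engine is Proposition~\ref{prop:sepgr} together with the estimate \eqref{eq:est1}. For an admissible $G_*$ with $|V(G_*)|=m$, \eqref{eq:pdfsepgr} and $\bP[\mf S_c(H[v_*])\ge -\ln u]\le u$ give
\[
 \bP[G_{\mf S}=G_*]\ \le\ |\partial_S G_*|\int_0^1 u^{m}(1-u)^{|\partial_S G_*|-1}\Di u .
\]
Summing over $G_*\in\msc A_m(G^{(n)})$ and, when $m\ge L$, invoking hypothesis (i) to bound $(1-u)^{|\partial_S G_*|-1}\le(1-u)^{bm}$ on $[0,1]$, we obtain
\[
 \bP\big[|G^{(n)}_{\mf S}|=m\big]\ \le\ a_{m,n}\int_0^1\big(u(1-u)^b\big)^m\Di u,\qquad m\ge L .
\]

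To make this uniform in $n$, I would first check that $\msc A_m(G^{(n)})\subseteq\msc A_m(G^{(n+1)})$: by Lemma~\ref{lem:admissable} and the definition of $T^{(n)}$ in Definition~\ref{defn:exhaust}, a vertex set carrying no target of $G^{(n)}$ and whose every induced component contains a source is still target-free and sourced as a subgraph of $G^{(n+1)}$ (a neighbour lying outside $G^{(n+1)}$ also lies outside $G^{(n)}$, and the induced subgraph on a fixed vertex set does not change). Hence $a_{m,n}$ is nondecreasing in $n$ with limit $a_m$, so $a_{m,n}\le a_m$; summing the previous bound over $m\ge M$ for any $M>L$ and interchanging sum and integral by Tonelli yields the $n$-free estimate
\[
 \bP\big[|G^{(n)}_{\mf S}|\ge M\big]\ \le\ \int_0^1\ \sum_{m\ge M} a_m\big(u(1-u)^b\big)^m\,\Di u .
\]

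It then remains to send $M\to\infty$. The elementary optimisation $\max_{u\in[0,1]}u(1-u)^b=\tfrac{b^b}{(b+1)^{b+1}}$, attained at $u=\tfrac1{b+1}$, together with hypothesis (ii) shows that $u(1-u)^b$ never exceeds the radius of convergence of $f$; therefore $\sum_{m\ge L}a_m(u(1-u)^b)^m=f(u(1-u)^b)$ is finite for every $u\in[0,1]$ save possibly $u=\tfrac1{b+1}$, and by \eqref{eq:integrable} it is an integrable majorant of all the tails on $[0,1]$. For each fixed $u\neq\tfrac1{b+1}$ the tail $\sum_{m\ge M}a_m(u(1-u)^b)^m$ tends to $0$ as $M\to\infty$, so dominated convergence gives $\int_0^1\sum_{m\ge M}a_m(u(1-u)^b)^m\Di u\to0$. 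Choosing $M$ so that this integral is below a prescribed $\gep>0$ then bounds $\bP[|G^{(n)}_{\mf S}|\ge M]$ by $\gep$ uniformly in $n$, which is tightness.

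The Beta-type integral manipulation and the calculus optimisation are routine; the one delicate point is the endpoint $u=\tfrac1{b+1}$, where $f(u(1-u)^b)$ may legitimately diverge if the radius of convergence of $f$ equals $\tfrac{b^b}{(b+1)^{b+1}}$ exactly. Controlling this is precisely the role of condition \eqref{eq:integrable}, which supplies the dominating function for the final dominated-convergence step; without it the interchange of limits could fail. A secondary point to verify carefully is the monotonicity $a_{m,n}\le a_m$, since the target sets $T^{(n)}$ themselves vary with $n$.
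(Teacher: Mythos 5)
Your proposal is correct and follows essentially the same route as the paper's proof: bound $\bP[G_{\mf S}=G_*]$ via Proposition~\ref{prop:sepgr} and \eqref{eq:est1}, use hypothesis (i) to reduce to $a_{m,n}\int_0^1(u(1-u)^b)^m\Di u$, control $a_{m,n}\le a_m$ by the monotonicity of admissible subgraphs and their boundaries along the exhaustion, and kill the tail uniformly in $n$ using \eqref{eq:integrable}. The only cosmetic difference is that the paper dispatches the final limit $M\to\infty$ by noting the tail of a convergent series vanishes (after a monotone-convergence interchange) rather than by dominated convergence, and it additionally records that $a_{m,n}$ is eventually constant in $n$; neither changes the substance.
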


Observe that, since $c:=\max_{x\in[0,1]} x(1-x)^b=\frac{b^b}{(b+1)^{(b+1)}}$, condition (ii) requires that the radius of convergence, $r$, of the power series $f$ is at least $c$. In case $r>c$, this condition is trivially satisfied as the integrand is bounded. However, in case of equality $r=c$, $f$ has a singularity at $c$ by virtue of Pringsheim's theorem, cf. \cite[Theorem IV.6]{flajolet2009analytic}, and \eqref{eq:integrable} is a non-trivial requirement. 
We also remark that condition (i) above is a variant of the notion of expander graphs, which play a crucial role in the theory of percolation, see e.g. \cite{ABS04}. 

\begin{proof}
 We first show that for fixed $m$, the sequence $a_{m,n}$ is nondecreasing and eventually constant. Recall from definition \ref{defn:exhaust} that the sets $T^{(n)}$ consist of two parts, namely $T\cap V\left(G^{(n)}\right)$ and vertices that are incident to edges leaving $G^{(n)}$. By copying the respective argument from the proof of Proposition \ref{prop:perc}, we can show again that $\dist\left(S, T^{(n)}\setminus T\right)\to\infty$ as $n\to\infty$. Hence, for $n$ sufficiently large, the closed neighbourhood of $S$ of radius $m$ will become independent of $n$, and so will $a_{m,n}$. Moreover, since the $G^{(n)}$ are monotonously growing, any admissible subgraph in $G^{(n)}$ will also be admissible in $G^{(n+1)}$ and the number of boundary vertices will not decrease when changing from $G^{(n)}$ to $G^{(n+1)}$. Hence $a_{m,n}$ is nondecreasing in $n$. In particular, the limit $a_m$ exists, is finite, and an upper bound to $a_{m,n}$ for all $n$.
 
 Let $M\geq L$. We now apply Proposition \ref{prop:sepgr}, where we set $p_{v_*}(x):= \bP[\mf S_c(H[v_*])\geq -\ln x]$ (note that this still depends on $G_{\mf S}$ as well!) for brevity. Summing over all $m\geq M$ and all $A\in\msc A_m(G^{(n)})$ yields
 \begin{align*}
  \bP\left[\big|G^{(n)}_{\mf S}\big|\geq M\right]
  &=\sum_{m=M}^\infty \sum_{A\in \msc A_m(G^{(n)})} \int_0^1 x^{m-1}(1-x)^{|\partial_S A|-1} \sum_{v_*\in\partial_S A} p_{v_*}(x)\Di x\\
  &\leq \sum_{m=M}^\infty \sum_{A\in \msc A_m(G^{(n)})} |\partial_S A| \int_0^1 x^m(1-x)^{|\partial_S A|-1}\Di x,
 \end{align*}
 where we applied the estimate \eqref{eq:est1}. Using assumption (i), we get
 \begin{align*}
  \bP\left[\big|G^{(n)}_{\mf S}\big|\geq M\right]
  &\leq \sum_{m=M}^\infty \sum_{A\in \msc A_m(G^{(n)})} |\partial_S A| \int_0^1 (x(1-x)^b)^m \Di x\\
  &=\sum_{m=M}^\infty \int_0^1 a_{m,n}(x(1-x)^b)^m\Di x.
 \end{align*}
 Then, by the above argument on the monotonicity of $a_{m,n}$, we obtain
 \begin{equation*}
  \bP\left[\big|G^{(n)}_{\mf S}\big|\geq M\right] \leq \sum_{m=M}^\infty \int_0^1 a_m (x(1-x)^b)^m \Di x. 
 \end{equation*}
 By monotone convergence, we moreover obtain for all $M\geq L$ that 
 \begin{multline*}
  \sum_{m=M}^\infty \int_0^1 a_m(x(1-x)^b)^m\Di x 
  =\int_0^1 \sum_{m=M}^\infty a_m(x(1-x)^b)^m\Di x\\
  \leq \int_0^1 \sum_{m=L}^\infty a_m(x(1-x)^b)^m\Di x
  =\int_0^1 f(x(1-x)^b)\Di x
 \end{multline*}
 which is finite by assumption (ii). Hence the tail of the series on the left-hand side converges to zero, and therefore $\bP\left[\big|G^{(n)}_{\mf S}\big|\geq M\right] \to 0$ uniformly in $n$ as $M\to\infty$ as well. 
\end{proof}

\begin{rem}\label{rem:tight}
 Tightness of the sizes of the separation graphs $\left|G_{\mf S}^{(n)}\right|$ is a helpful property in order to translate limit laws from the cutting times $\mf C\left(G^{(n)}\right)$ to the separation times $\mf S\left(G^{(n)}\right)$: Assume that there exist sequences $\ga_n$ and $\gb_n>0$ such that 
 \[
  \frac{\mf C\left(G^{(n)}\right)-\ga_n}{\gb_n} \dto X \qquad \text{as } n\to\infty
 \]
 for a random variable $X$ with positive variance. If $\gb_n\to\infty$ and $\left|G_{\mf S}^{(n)}\right|$ forms a tight family of random variables, then the deterministic estimate 
 \[
  \mf S\left(G^{(n)}\right) \leq \mf C\left(G^{(n)}\right) \leq \mf S\left(G^{(n)}\right) + \left|G_{\mf S}^{(n)}\right|
 \]
 implies 
 \[
  \left|\frac{\mf C\left(G^{(n)}\right)-\ga_n}{\gb_n}-\frac{\mf S\left(G^{(n)}\right)-\ga_n}{\gb_n}\right| 
  \leq \frac{\left|G_{\mf S}^{(n)}\right|}{\gb_n}\to 0 
 \]
 in probability, and hence 
 \begin{equation*}\label{eq:seplaw}
  \frac{\mf C\left(G^{(n)}\right)-\ga_n}{\gb_n} \dto X \qquad \text{as } n\to\infty.
 \end{equation*}
\end{rem}

\section{Separating trees}

As we saw in Lemma \ref{lem:grpdfs}, the expressions for probabilities of events occurring in $\Cut_c$ happen to be polynomial expressions in $e^{-x}$. This is not surprising: If the event $E$ is only depending on the subgraph at a time $x\geq0$, then 
\[
 \bP[E]=\sum_{G_*\in E} \bP[G_x=G_*]
\]
where the summation ranges over all subgraphs $G_*\ssq G$ in $E$, and from equation \eqref{eq:pdfgraph2}, this is polynomial in $e^{-x}$. As it turns out, this is especially useful for rooted trees, where the recursive structure of the tree translates to a recursion for said polynomial expression.  

Hence, in this section, let $G=\mc T$ be a rooted tree, where we will always interpret the root node as the (unique) source vertex, and the leaves as targets. 

To each node $w\in V(\mc T)$, we assign a polynomial $p[w]$ from $\IZ[x]$ recursively as follows: If $w$ is a leaf, define $p[w](x)=x$. Otherwise, denote the children of $w$ by $w_1,...,w_r$ for $r\geq 1$. Then, define 
\begin{equation}\label{eq:treerec}
 p[w](x):=x\left(1-\prod_{i=1}^r \left(1-p[w_i](x)\right)\right).
\end{equation}
Observe that in the case where $w$ only has a single child $w_1$, this simplifies to $p[w](x)=x p[w_1](x)$. Furthermore, if $\mc T_*$ is the fringe subtree of $\mc T$ rooted at $w$ (i.e. the subtree consisting of $w$ together with all its descendants), we also write $p[\mc T_*]:=p[w]$. In particular, $p[\mc T]:=p[\rt]$.

\begin{prop}\label{prop:treepol}
 For the continuous-time cutting model on a rooted tree $\mc T$, we have 
 \begin{equation}\label{eq:treepol}
  \bP[\mf S_c(\mc T)\geq x]= p[\mc T](e^{-x})
 \end{equation}
 for all $x\geq0$. Equivalently, one can interpret $p[\mc T](q)$ for $q\in[0,1]$ as the probability that $\Ber(q)$-site percolation on $\mc T$ contains a path from the root to a leaf. 
\end{prop}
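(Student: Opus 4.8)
The plan is to prove \eqref{eq:treepol} by induction on the structure of the rooted tree $\mc T$, exploiting the recursion \eqref{eq:treerec}. The quantity to compute is $\bP[\mf S_c(\mc T)\geq x]$, which by definition is the probability that at time $x$ the remaining graph still contains a target node (a leaf), or equivalently — since the surviving component is always attached to the root source — that at time $x$ there is still a path from the root to some leaf through vertices whose $\Exp(1)$ clocks have not yet rung. Because at time $x$ each vertex $v$ survives independently with probability $\bP[X_v\geq x]=e^{-x}$, this is exactly the statement that $p[\mc T](e^{-x})$ equals the probability that $\Ber(e^{-x})$-site percolation on $\mc T$ (rooted, with the root always kept or more precisely: we condition on, or simply note that the event ``root-to-leaf path survives'' already forces the root to survive) contains an open path from the root to a leaf. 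So the two formulations in the proposition are manifestly the same object, and it suffices to verify the percolation statement.

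For the induction, write $q=e^{-x}\in(0,1]$ and let $\pi(\mc T_*)$ denote the probability that $\Ber(q)$-percolation on a fringe subtree $\mc T_*$, \emph{given that its root is open}, contains an open path from that root to a leaf of $\mc T_*$; then the unconditional probability of a root-to-leaf open path is $q\cdot\pi(\mc T_*)$, and I claim $p[\mc T_*](q)=q\cdot\pi(\mc T_*)$... wait, this is not quite matching the recursion, so let me instead directly track $P(\mc T_*):=\bP[\text{root-to-leaf open path in }\mc T_*]$ and show $P(\mc T_*)=p[\mc T_*](q)$. Base case: if $w$ is a leaf, the ``root-to-leaf path'' is just $w$ itself being open, which has probability $q$, matching $p[w](q)=q$. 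Inductive step: let $w$ have children $w_1,\dots,w_r$ with fringe subtrees $\mc T_1,\dots,\mc T_r$. There is an open root-to-leaf path in $\mc T_*$ iff $w$ is open \emph{and} for at least one $i$ the edge to $w_i$ leads into an open root-to-leaf path of $\mc T_i$ (here ``root of $\mc T_i$'' means $w_i$). Since the subtrees are vertex-disjoint and the percolation is independent, the events ``$\mc T_i$ has an open root-to-leaf path'' are mutually independent, each with probability $P(\mc T_i)=p[\mc T_i](q)$ by the inductive hypothesis, so the probability that none of them occurs is $\prod_{i=1}^r(1-p[\mc T_i](q))$. Conditioning on $w$ being open (probability $q$, independent of everything below), we get
\[
 P(\mc T_*)=q\Bigl(1-\prod_{i=1}^r\bigl(1-p[\mc T_i](q)\bigr)\Bigr)=p[w](q),
\]
which is exactly \eqref{eq:treerec} evaluated at $q$. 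Taking $q=e^{-x}$ and $\mc T_*=\mc T$ gives \eqref{eq:treepol}.

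The only genuine point requiring care — and the place I expect a careful reader to want detail — is the identification of the event $\{\mf S_c(\mc T)\geq x\}$ with the event ``root-to-leaf path survives at time $x$'' in the continuous-time cutting model, as opposed to in plain independent site percolation. In the cutting model, when a vertex's clock rings, not only that vertex but also any newly-detached source-free components are discarded; however, since the root is the unique source, a vertex is present in $G_x$ precisely when its clock has not rung \emph{and} it is still connected to the root through vertices whose clocks have not rung — and in a tree the latter is equivalent to ``every vertex on the unique root-path has clock not yet rung.'' Hence the set of surviving vertices at time $x$ is exactly the cluster of the root in the independent site-percolation configuration $\{v:X_v\geq x\}$, and $G_x$ still contains a target iff that cluster reaches a leaf. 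This is the same reasoning already used in Lemma~\ref{lem:grpdfs} and in the proof of Proposition~\ref{prop:perc}, so I would invoke it briefly rather than re-derive it. The induction itself is routine once the independence of the subtree events is noted, which is immediate because fringe subtrees of distinct children of $w$ share no vertices.
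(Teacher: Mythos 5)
Your proposal is correct and follows essentially the same route as the paper: both reduce $\{\mf S_c(\mc T)\geq x\}$ to the survival of a root-to-leaf path at time $x$, and both prove the identity by induction via the decomposition at the root, using independence of the fringe subtrees to obtain the factor $1-\prod_i(1-p[\mc T_i](e^{-x}))$. Your extra care in identifying the surviving graph with the root cluster of the site-percolation configuration is a welcome elaboration of a step the paper treats as immediate, but it does not change the argument.
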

\begin{proof}
 For $\mf S_c(\mc T)$ to be larger than $x$, there must exist a path  from the root node to a leaf which is still intact at time $x$. Now, use induction over the height of $\mc T$:
 
 If $\mc T$ consists of a single node, then $\bP[\mf S_c(\mc T)\geq x]=e^{-x}=p[\mc T](e^{-x})$. Now assume that \eqref{eq:treepol} holds true for all trees up to a certain height $h\geq 1$, and let $\mc T$ be a rooted tree of height $h+1$. Denote the children of the root in $\mc T$ by $w_1,...,w_r$ for some $r\geq 1$. Removing the root node creates $r$ trees $\mc T_1,...,\mc T_r$, which we endow with the new root nodes $w_1,...,w_r$, respectively. As the height of $\mc T_1,...,\mc T_r$ is at most $h$, applying the induction hypothesis yields
 \[
  \bP[\mf S_c(\mc T_i)\geq x]=p[\mc T_i](e^{-x}),\qquad \text{for }i=1,...,r.
 \]
 The existence of a desired path in $\mc T$ is equivalent to the root node not having been cut yet intersected with the event that not all of the subtrees $\mc T_i$ have been separated yet. Hence, 
 \begin{multline*}
  \bP[\mf S_c(\mc T)\geq x] 
  = e^{-x}\left(1-\prod_{i=1}^r \bP[\mf S_c(\mc T_i)< x]\right)\\
  = e^{-x}\left(1-\prod_{i=1}^r \left(1-p[\mc T_i](e^{-x})\right)\right)
  = p[\mc T](e^{-x})
 \end{multline*}
 by \eqref{eq:treerec}; thus concluding the proof.
\end{proof}

A \emph{transversal} in a rooted tree $\mc T$ is defined to be a subset of vertices that intersect every path from the root to a leaf. It then follows from the proof of Proposition \ref{prop:treepol} that $1-p[\mc T](1-q)$ yields the probability that a random set of vertices, containing each vertex independently with probability $q$, is a transversal of $\mc T$. It is this expression that was investigated in \cite{devroye2011note,2013transversals}. 

\begin{rem}
 There are some straightforward conclusions to be drawn from the previous proposition: The polynomials $p[\mc T](x)$ map the interval $[0,1]$ to itself, with $p[\mc T](0)=0$ and $p[\mc T](1)=1$. Furthermore, they are monotonically increasing on this interval, and by \eqref{eq:treerec}, $p[\mc T](x)\leq x$ for $x\in[0,1]$. 
\end{rem}

\begin{defn}\label{defn:subtrees}
 Let $\mc T$ be a rooted tree. 
 \begin{enumerate}[(i)]
  \item A subtree $\mc T_*$ is called \emph{faithful} if it contains the root of $\mc T$ as its own root and if every leaf of $\mc T_*$ is a leaf of $\mc T$. We denote the set of all faithful subtrees of $\mc T$ by $\msc F(\mc T)$, and the set of all faithful subtrees on $n$ vertices by $\msc F_n(\mc T)$. 
  \item Equivalently, a faithful subtree $\mc T_*\ssq \mc T$ can be seen as choosing a number of paths from the root to the leaves of $\mc T$. We hence denote the set of all faithful subtrees of $\mc T$ with exactly $\ell$ leaves by $\msc P_\ell(\mc T)$.
  \item A subtree $\mc T_*$ is called \emph{trimmed} if it contains the root of $\mc T$ as its own root and if $\deg_{\mc T_*}(v)=\deg_{\mc T}(v)$ for all non-leaves $v$ of $\mc T_*$.
 \end{enumerate}
\end{defn}

\begin{figure}[h]
 \includegraphics[scale=0.65]{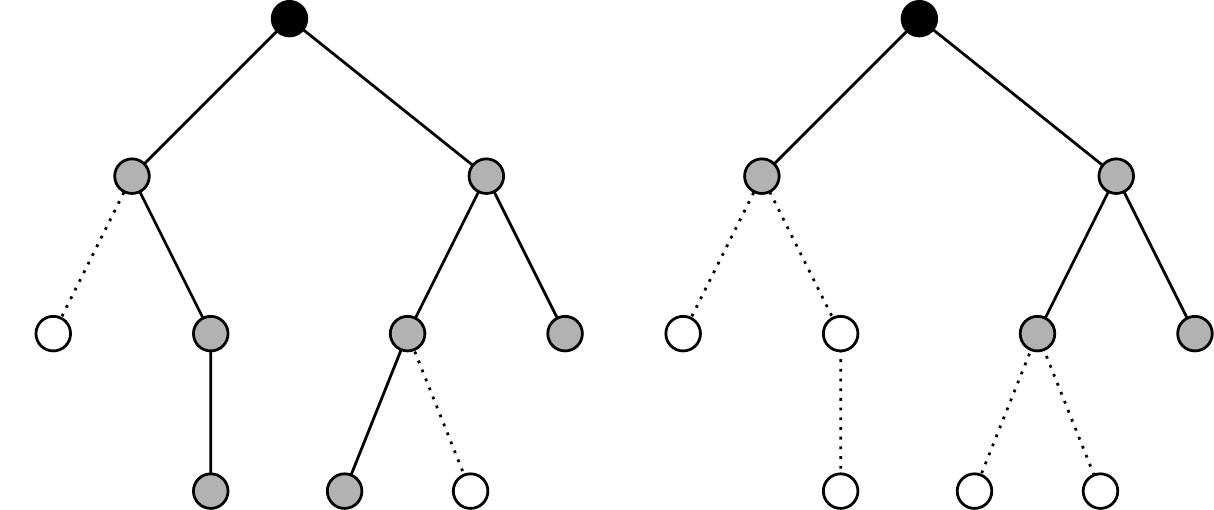}
 \caption{A faithful (left) and a trimmed (right) subtree of the same underlying rooted tree $\mc T$. The root is shown in black, with vertices belong to the subtrees being coloured grey. Dotted edges and white vertices belong to $\mc T$, but not to the subgraph.}
\end{figure}

The following propositions gives an alternative, combinatorial characterisation of $p[\mc T]$ which might be of independent interest:
\begin{prop}\label{prop:coeffs}
 Write $p[\mc T](x)=\sum_{j=0}^n a_j x^j$ with $a_j\in\IZ$ for $j=0,1,...,n$. Denote by $L(\mc T_*)$ the set of leaves of $\mc T_*$. Then,
 \begin{align}
  a_j &=\sum_{\mc T_*\in\msc F_j(\mc T)}  (-1)^{|L(\mc T_*)|+1}
        \label{eq:coeffs}\\
      &=\sum_{\ell\geq1} (-1)^{\ell+1} \left|\msc F_j(\mc T)\cap \msc P_\ell(\mc T)\right|. \label{eq:coeffs2}
 \end{align}
 In particular, $\deg(p[\mc T])=|\mc T|$ and $a_j=0$ for $j\leq \dist(\rt,L(\mc T))$.
\end{prop}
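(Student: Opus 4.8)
The plan is to deduce \eqref{eq:coeffs} from the percolation description of $p[\mc T]$ in Proposition~\ref{prop:treepol} by inclusion--exclusion over root-to-leaf paths, and then to obtain the ``in particular'' assertions simply by identifying for which $j$ the set $\msc F_j(\mc T)$ is non-empty. The one structural fact I need is a precise dictionary between faithful subtrees and sets of root-to-leaf paths, already foreshadowed by Definition~\ref{defn:subtrees}(ii).

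\textbf{The bijection.} Write $\Pi$ for the set of root-to-leaf paths of $\mc T$ and, for a leaf $\lambda$, let $\pi_\lambda\in\Pi$ be the unique path from $\rt$ to $\lambda$. I would first check that $\mc P_*\mapsto\bigcup_{\pi\in\mc P_*}\pi$ is a bijection from non-empty subsets of $\Pi$ onto $\msc F(\mc T)$, with inverse $\mc T_*\mapsto\{\pi_\lambda:\lambda\in L(\mc T_*)\}$. In one direction, a connected subgraph of a tree containing two vertices contains the path joining them, so a faithful subtree equals the union of the root-to-leaf paths to its own leaves; in the other, a union of paths through $\rt$ is connected, is a subtree of $\mc T$, and (because any vertex of such a union that is not an endpoint of one of the chosen paths still has a child in the union) has all its leaves among the leaves of $\mc T$. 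Under this correspondence one has the two numerical translations $|\mc P_*|=|L(\mc T_*)|$ and $\bigl|V\bigl(\bigcup_{\pi\in\mc P_*}\pi\bigr)\bigr|=|\mc T_*|$.

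\textbf{Inclusion--exclusion.} Fix $q\in[0,1]$ and keep each vertex of $\mc T$ independently with probability $q$; let $E_\pi$ be the event that every vertex of $\pi$ is kept, so that $p[\mc T](q)=\bP\bigl[\bigcup_{\pi\in\Pi}E_\pi\bigr]$ by Proposition~\ref{prop:treepol}. Since $\bP\bigl[\bigcap_{\pi\in\mc P_*}E_\pi\bigr]=q^{|V(\bigcup_{\pi\in\mc P_*}\pi)|}$, the inclusion--exclusion formula and the bijection above give
\[
 p[\mc T](q)=\sum_{\emptyset\neq\mc P_*\ssq\Pi}(-1)^{|\mc P_*|+1}q^{\bigl|V(\bigcup_{\pi\in\mc P_*}\pi)\bigr|}
 =\sum_{\mc T_*\in\msc F(\mc T)}(-1)^{|L(\mc T_*)|+1}q^{|\mc T_*|}.
\]
Collecting terms by $j=|\mc T_*|$ and comparing coefficients (legitimate, as both sides are polynomials agreeing on the infinite set $[0,1]$) yields \eqref{eq:coeffs}, and then \eqref{eq:coeffs2} follows by further sorting $\msc F_j(\mc T)$ according to the number of leaves $\ell=|L(\mc T_*)|$.

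\textbf{Degree and low-order vanishing.} Finally, every faithful subtree contains a whole root-to-leaf path and hence has at least $\dist(\rt,L(\mc T))+1$ vertices, so $\msc F_j(\mc T)=\emptyset$ and therefore $a_j=0$ whenever $j\leq\dist(\rt,L(\mc T))$; conversely $\msc F_j(\mc T)=\emptyset$ for $j>|\mc T|$, while the only element of $\msc F_{|\mc T|}(\mc T)$ is $\mc T$ itself, giving $a_{|\mc T|}=(-1)^{|L(\mc T)|+1}=\pm1\neq0$, so $\deg(p[\mc T])=|\mc T|$. I do not anticipate a genuine obstacle; the only step requiring care is the verification of the bijection in the second paragraph --- in particular, that a union of root-to-leaf paths produces no ``extra'' leaves, and that the index $|\mc P_*|$ in inclusion--exclusion corresponds to the \emph{number of leaves} of $\mc T_*$ rather than its number of vertices.
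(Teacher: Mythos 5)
Your proposal is correct and follows essentially the same route as the paper: both use the percolation/continuous-time interpretation from Proposition~\ref{prop:treepol}, apply inclusion--exclusion over the events that individual root-to-leaf paths survive, identify non-empty sets of such paths with faithful subtrees (which the paper takes as read from Definition~\ref{defn:subtrees}(ii) and you verify explicitly), and compare coefficients. The extra care you devote to the bijection and to $|L(\mc T_*)|=|\mc P_*|$ is a welcome elaboration of a step the paper leaves implicit, not a different argument.
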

\begin{proof}
 First note that equation $\eqref{eq:coeffs}$ implies the other statements: Every subtree of $\mc T$ contains at most $|\mc T|$ vertices, and the only subtree with exactly $|\mc T|$ vertices is $\mc T$ itself. Hence $a_j=0$ for $j>|\mc T|$ and $a_{|\mc T|}=\pm 1$. On the other hand, any faithful subtree must contain at least one path from the root to $L(\mc T)$, and such a path requires at least $\dist(\rt,L(\mc T))+1$ vertices. 
 
 To show \eqref{eq:coeffs}, we employ $p[\mc T](e^{-x})=\bP[\mf S_c(\mc T)\geq x]$ from Proposition \ref{prop:treepol} and express the probability on the right-hand side in a different manner. To this end, write $\mc T_x$ for the tree obtained after time $x$. As observed already, $\{\mf S_c(\mc T)\geq x\}$ is the same as asking that $\mc T_x$ contains at least one path from the root to the leaves. Therefore, applying the inclusion-exclusion principle yields
 \begin{equation*} 
  \bP[\mf S_c(\mc T)\geq x]
  = \bP\left[\bigcup_{\mc T_*\in \msc P_1(\mc T)} \{\mc T_*\ssq \mc T_x\} \right]
  = \sum_{\ell\geq 1} (-1)^{\ell+1} \sum_{\mc T_*\in\msc P_\ell(\mc T)} \bP[\mc T_*\ssq \mc T_x].
 \end{equation*}
 Moreover, $\bP[\mc T_*\ssq \mc T_x]=(e^{-x})^{|V(\mc T_*)|}$, hence
 \begin{equation*}
  p[\mc T](e^{-x})=\bP[\mf S_c(\mc T)\geq x]
  =\sum_{\ell\geq 1} (-1)^{\ell+1} \sum_{\mc T_*\in\msc P_\ell(\mc T)} (e^{-x})^{|V(\mc T_*)|}.
 \end{equation*}
 Substituting $u$ for $e^{-x}$ and rearranging the sums on the right-hand side, we obtain the desired expressions for the coefficients of $p[\mc T]$. 
\end{proof}

\begin{cor}\label{cor:rootsep}
 We have
 \begin{equation}\label{eq:rootsep}
  \bP[G_{\mf S}=\emptyset]
  =\int_0^1 \frac{p[\mc T](u)}{u} \Di u.
 \end{equation}
 Moreover, we have for any subtree $\mc T_*\ssq \mc T$ containing the root node but none of the leaves:
 \begin{equation}\label{eq:tsep}
  \bP[G_{\mf S}=\mc T_*] = \sum_{v_*\in\partial \mc T_*} \int_0^1 u^{|\mc T_*|-1}(1-u)^{|\partial \mc T_*|-1} p[v_*](u) \Di u.
 \end{equation}
\end{cor}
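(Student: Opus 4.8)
The plan is to derive both equations as special cases of results already established in Section~4, using Proposition~\ref{prop:treepol} to replace the abstract quantities $\bP[\mf S_c(H[v_*])\geq -\ln u]$ by the explicit polynomials $p[v_*]$. Recall that on a rooted tree the root is the unique source, so for any subtree $\mc T_*$ containing the root, the boundary $\partial\mc T_*=\partial_S\mc T_*$ coincides with the ordinary (exterior) vertex boundary, and for each $v_*\in\partial\mc T_*$ the graph $H[v_*]$ appearing in Proposition~\ref{prop:sepgr} is exactly the fringe subtree of $\mc T$ rooted at $v_*$ (with $v_*$ declared the source and the remaining leaves the targets) --- this is because deleting $\mc T_*$ and the other boundary vertices from a tree leaves the collection of fringe subtrees hanging off $\partial\mc T_*$, and the one containing $v_*$ is precisely that fringe subtree. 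Hence $\bP[\mf S_c(H[v_*])\geq -\ln u]=p[v_*](u)$ by Proposition~\ref{prop:treepol}.

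For \eqref{eq:tsep}, I would simply substitute this identity into the formula \eqref{eq:pdfsepgr} of Proposition~\ref{prop:sepgr} applied to $\mc T$ with $G_*=\mc T_*$: the hypothesis that $\mc T_*$ contains the root but no leaves is exactly the admissibility condition of Lemma~\ref{lem:admissable} (every component contains a source --- there is only one component and it contains the root --- and no target is present), so the proposition applies and gives \eqref{eq:tsep} verbatim.

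For \eqref{eq:rootsep}, the cleanest route is to observe that $\{G_{\mf S}=\emptyset\}$ is exactly the event $\{\mf S(\mc T)=\mf C(\mc T)\}$, equivalently $\{\mf S_c(\mc T)\geq X_{\rt}\}$ in the continuous-time model (cf.\ Remark~\ref{rem:equality} with $v_*=\rt$, since the root is the unique source). Conditioning on $X_{\rt}=x$ and using that $\{\mf S_c(\mc T)\geq X_{\rt}\}$ without conditioning is the event that some neighbour of the root connects to a leaf --- an event of probability $p[\mc T](e^{-x})/e^{-x}$ exactly as in the proof of Proposition~\ref{prop:sepgr} --- one gets
\[
 \bP[G_{\mf S}=\emptyset]=\int_0^\infty \frac{p[\mc T](e^{-x})}{e^{-x}}\,e^{-x}\Di x=\int_0^\infty p[\mc T](e^{-x})\Di x,
\]
and the substitution $u=e^{-x}$, $\Di u=-u\Di x$, turns this into $\int_0^1 p[\mc T](u)u^{-1}\Di u$. (Alternatively, one can sum \eqref{eq:tsep}-type contributions, or integrate \eqref{eq:treepol} directly, but going through Remark~\ref{rem:equality} is shortest.) One should check the integral converges: by Proposition~\ref{prop:coeffs} the lowest-degree term of $p[\mc T]$ has degree $\dist(\rt,L(\mc T))+1\geq 1$, so $p[\mc T](u)/u$ is a polynomial and the integral is finite.

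The only mild subtlety --- and the step I would be most careful about --- is the identification of $H[v_*]$ as a fringe subtree together with the claim $\partial_S\mc T_*=\partial\mc T_*$; both hinge on $\mc T$ being a tree with the root as sole source, so that removing an initial trimmed/faithful-type piece disconnects it cleanly into fringe subtrees and no stray source reattaches a discarded component. Once that geometric observation is in place, everything is a direct substitution into Proposition~\ref{prop:sepgr} and Remark~\ref{rem:equality}, with no real computation.
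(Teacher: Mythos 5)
Your proposal is correct and follows essentially the same route as the paper, whose proof is just the observation that both identities follow from Propositions~\ref{prop:sepgr} and \ref{prop:treepol}; your identification of $H[v_*]$ with the fringe subtree at $v_*$ and of $\partial_S\mc T_*$ with $\partial\mc T_*$ is exactly the (unstated) content of that one-line proof. Your derivation of \eqref{eq:rootsep} via Remark~\ref{rem:equality} is the same computation as applying Proposition~\ref{prop:sepgr} with $G_*=\emptyset$, $\partial_S\emptyset=\{\rt\}$, $H[\rt]=\mc T$.
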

\begin{proof}
 Both statements follow immediately from Propositions \ref{prop:sepgr} and \ref{prop:treepol}.
\end{proof}

\begin{lemma}\label{lem:treemon}
 Let $\mc T$ be a rooted tree. 
 \begin{enumerate}[(i)]
  \item If $\mc T_*\ssq \mc T$ is a faithful subtree then $p[\mc T_*](x)\leq p[\mc T](x)$ for all $x\in[0,1]$.
  \item If $\mc T_*\ssq \mc T$ is a trimmed subtree then $p[\mc T_*](x)\geq p[\mc T](x)$ for all $x\in[0,1]$.
 \end{enumerate}
\end{lemma}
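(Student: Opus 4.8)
The plan is to exploit the percolation interpretation recorded in Proposition~\ref{prop:treepol}: for $q\in[0,1]$, the value $p[\mc T](q)$ equals the probability that $\Ber(q)$-site percolation on $\mc T$ leaves intact a path from the root to a leaf. Couple the percolation on $\mc T_*$ with the one on $\mc T$ by using, for every vertex of $\mc T_*$, the very same $\Ber(q)$ variable as in $\mc T$. Since both $p[\mc T_*]$ and $p[\mc T]$ are polynomials that agree with these probabilities at every $q\in[0,1]$, it suffices to prove a deterministic containment of the relevant events for each fixed percolation configuration, and the claimed polynomial inequalities then hold on all of $[0,1]$.

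For (i), if $\mc T_*$ is faithful then any open root-to-leaf path of $\mc T_*$ is automatically an open root-to-leaf path of $\mc T$: the root of $\mc T_*$ is the root of $\mc T$, the edges of $\mc T_*$ are edges of $\mc T$, the vertices on the path are open in $\mc T$ as well, and — the point where faithfulness enters — each leaf of $\mc T_*$ is a leaf of $\mc T$. Hence $\{\text{open root--leaf path in }\mc T_*\}\subseteq\{\text{open root--leaf path in }\mc T\}$, so $p[\mc T_*](q)\le p[\mc T](q)$.

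For (ii), if $\mc T_*$ is trimmed I claim conversely that every open root-to-leaf path of $\mc T$ contains, as a prefix, an open root-to-leaf path of $\mc T_*$. Let $P=(v_0,\dots,v_k)$ be such a path in $\mc T$, with $v_0$ the root and $v_k$ a leaf of $\mc T$, and let $i$ be the largest index with $v_i\in\mc T_*$; this is well defined since $v_0\in\mc T_*$, and since $\mc T_*$ is a connected subtree of $\mc T$ containing both the root and $v_i$, the whole prefix $(v_0,\dots,v_i)$ lies in $\mc T_*$. I claim $v_i$ is a leaf of $\mc T_*$: if it were not, then $\deg_{\mc T_*}(v_i)=\deg_{\mc T}(v_i)$, so every $\mc T$-child of $v_i$ belongs to $\mc T_*$; moreover $v_i\neq v_k$, because a leaf of $\mc T$ has no children and is therefore also a leaf of $\mc T_*$; hence the successor $v_{i+1}$ of $v_i$ on $P$ is such a child and lies in $\mc T_*$, contradicting maximality of $i$. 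Thus $(v_0,\dots,v_i)$ is an open root-to-leaf path of $\mc T_*$, giving $\{\text{open root--leaf path in }\mc T\}\subseteq\{\text{open root--leaf path in }\mc T_*\}$ and hence $p[\mc T](q)\le p[\mc T_*](q)$.

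As an alternative I would note that both parts also follow by a direct induction on the height of $\mc T$ from the recursion \eqref{eq:treerec}, using that every $p[\,\cdot\,]$ maps $[0,1]$ into $[0,1]$ and that $p[\mc T](x)\le x$ on $[0,1]$: for (i), passing from $\mc T$ to the faithful $\mc T_*$ both removes some factors $1-p[\cdot]\in[0,1]$ from the product over children and replaces the remaining ones by the larger quantities $1-p[(\mc T_*)_{w_i}]\ge 1-p[\mc T_{w_i}]$ furnished by the induction hypothesis, so the product increases and $p$ decreases; for (ii), the only situation not handled by the induction step is when the root of $\mc T_*$ is itself a leaf of $\mc T_*$ (a ``pruned'' vertex), which is dealt with by $p[\mc T_*](x)=x\ge p[\mc T](x)$. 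The only thing requiring care in either route is the bookkeeping of which factors occur in the products — respectively, the case analysis locating where an open path of $\mc T$ first leaves $\mc T_*$ — but neither presents a genuine obstacle.
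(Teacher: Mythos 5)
Your proof is correct and follows essentially the same route as the paper: both invoke the percolation interpretation from Proposition~\ref{prop:treepol} and reduce each inequality to a deterministic containment of root-to-leaf path events under the natural coupling. Your maximal-index argument in (ii) merely fills in a detail the paper leaves implicit (that an open root-to-leaf path of $\mc T$ must pass through a leaf of the trimmed subtree), so no further comment is needed.
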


\begin{proof}
 Assume $\mc T_*\ssq \mc T$ is faithful. If, at time $-\ln x$, there exists an uncut path in $\mc T_*$ connecting the root to $L(\mc T_*)$ then this same path also exists in $\mc T$ and connects the root to $L(\mc T)$. Hence we obtain from Proposition \ref{prop:treepol}:
 \begin{align*}
  p[\mc T_*](x) = \bP[\mf S_c(\mc T_*)\geq -\ln x] \leq \bP[\mf S_c(\mc T)\geq -\ln x] =p[\mc T](x).
 \end{align*}
 Assume now that $\mc T_*\ssq \mc T$ is trimmed. If there is a path in $\mc T$ connecting the root to $L(\mc T)$ at time $-\ln x$, then this path also connects the root to $L(\mc T_*)$. Thus, 
 \begin{align*}
  p[\mc T_*](x) = \bP[\mf S_c(\mc T_*)\geq -\ln x] \geq \bP[\mf S_c(\mc T)\geq -\ln x] =p[\mc T](x),
 \end{align*}
 analogously to the first case.
\end{proof}

We conclude this section by looking at a version of the tree polynomial for two variables that turns out to be useful for modifying trees at a fringe node: For a fixed leaf $v_*$, initialise $p_{v_*}[v_*](x,y)=y$ and all other leaves by $x$ as before. Then, recursively define $p_{v_*}[w]$ as in \eqref{eq:treerec}, replacing all terms $p[\cdot](x)$ by $p_{v_*}[\cdot](x,y)$. While in the original construction, we introduced one variable $x$ for every vertex, we still introduce one variable $x$ for every vertex bar $v_*$, which gets a $y$-variable. If the choice of $v_*$ is fixed or irrelevant for the purpose of the argument, we will drop the index and simply write $p[w](x,y)$. Analogously to before, we extend the notation to allow polynomials to be assigned to fringe subtrees. 

\begin{lemma}\label{lem:bivariate}
 In the setting just introduced, $p[\mc T](x,y)=a_1(x)y+a_2(x)$ for $a_1,a_2\in\IZ[x]$. Moreover, the map $A:C[0,1]\to C[0,1]$ sending $f$ to $a_1(x)f(x) + a_2(x)$ is a (strict) contraction with respect to the maximum norm, unless $\mc T$ is a path with the root positioned on an end vertex. 
\end{lemma}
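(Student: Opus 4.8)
The plan is to establish the two claims in Lemma~\ref{lem:bivariate} separately: first the structural claim that $p[\mc T](x,y)$ is affine in $y$, then the contraction property. For the affine claim, I would induct on the height of $\mc T$, mimicking the proof of Proposition~\ref{prop:treepol}. The base case is $p[v_*](x,y)=y$, which is affine in $y$. For the inductive step, write the children of the root as $w_1,\dots,w_r$. Exactly one of the fringe subtrees rooted at the $w_i$ contains $v_*$ (say $\mc T_1$), and by the inductive hypothesis $p[\mc T_1](x,y)=c_1(x)y+c_0(x)$; the other factors $p[\mc T_i](x,y)=p[\mc T_i](x)$ are independent of $y$. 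Plugging into \eqref{eq:treerec}, the product $\prod_{i=1}^r(1-p[\mc T_i](x,y))$ is affine in $y$ since only one factor involves $y$ and it does so linearly, and multiplying by $x$ and subtracting from $x$ preserves affineness. This gives $p[\mc T](x,y)=a_1(x)y+a_2(x)$ with $a_1,a_2\in\IZ[x]$.

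For the contraction property, the key identities to extract from the structure are: (a) $a_1(x)\geq 0$ and $a_2(x)\geq 0$ on $[0,1]$, and (b) $a_1(1)+a_2(1)=p[\mc T](1,1)=1$ together with $a_1(0)+a_2(0)=p[\mc T](0,0)=0$; more importantly I want $\sup_{x\in[0,1]}a_1(x)<1$. Note that $p[\mc T](x,1)=p[\mc T](x)$ (the ordinary tree polynomial), so $a_1(x)+a_2(x)=p[\mc T](x)\leq x\leq 1$ for $x\in[0,1]$ by the remark following Proposition~\ref{prop:treepol}; since $a_2(x)\geq 0$, this already gives $a_1(x)\leq x\leq 1$. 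The Lipschitz constant of the map $A$ with respect to the maximum norm is exactly $\|a_1\|_\infty=\sup_{x\in[0,1]}a_1(x)$, so $A$ is a strict contraction precisely when $\|a_1\|_\infty<1$. Since $a_1(x)\leq x$ with equality only possibly at $x=1$, the only way $\|a_1\|_\infty=1$ is if $a_1(1)=1$, which forces $a_2(1)=0$. I would then argue combinatorially — or via a second induction on height — that $a_1(1)=1$ happens exactly when there is a unique root-to-leaf path in $\mc T$ passing through $v_*$ and moreover $a_2\equiv 0$, which unwinds to $\mc T$ being a path with the root at an end vertex (so $v_*$ is the unique leaf). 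Concretely, $a_1(1)$ counts (with signs, from Proposition~\ref{prop:coeffs}-type reasoning applied to the bivariate polynomial) the faithful subtrees whose only leaf is $v_*$; tracking the recursion, $a_1(1)=\prod$ over the spine from root to $v_*$ of the corresponding factors, and this equals $1$ only when every node on that spine has no other children, i.e.\ $\mc T$ is exactly that path.

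The main obstacle I anticipate is pinning down the exact extremal case — showing that $\|a_1\|_\infty=1$ forces $\mc T$ to be a path rooted at an endpoint, rather than merely that the spine to $v_*$ is "thin." The subtlety is that even if $a_1(1)<1$ one could in principle still have $a_1$ approach $1$ near $x=1$; but since $a_1$ is a polynomial with $a_1(1)<1$ when $\mc T$ is not such a path, continuity on the compact interval $[0,1]$ gives $\|a_1\|_\infty<1$ immediately. So the real work is the clean statement that $a_1(1)=1$ iff $\mc T$ is a path with the root at an end vertex. I would handle this by induction on height: if the root has $r\geq 2$ children, then at least one factor $1-p[\mc T_i](x)$ with $i\neq 1$ appears in the product, and since $p[\mc T_i](1)=1$ this factor vanishes at $x=1$, killing $a_1(1)$; and if $r=1$ with the single child not containing $v_*$ being impossible (as $v_*$ must be a descendant), we get $p[\mc T](x,y)=x\,p[\mc T_1](x,y)$, so $a_1(x)=x\,c_1(x)$ and $a_1(1)=c_1(1)$, reducing to the shorter tree; the recursion bottoms out at $\mc T=\{v_*\}$, where $a_1\equiv 1$ — the degenerate path of length zero rooted at its only vertex. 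Assembling these observations yields that $A$ is a strict contraction unless $\mc T$ is a path with the root on an end vertex.
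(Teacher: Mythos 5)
Your overall strategy is viable and lands close to the paper's, but one step as written is wrong: you claim $p[\mc T](x,1)=p[\mc T](x)$ and deduce $a_1(x)+a_2(x)=p[\mc T](x)\le x$. The bivariate polynomial is obtained from the univariate one by replacing the variable of the single leaf $v_*$ by $y$, so the correct diagonal identity is $p[\mc T](x,x)=p[\mc T](x)$, i.e.\ $x\,a_1(x)+a_2(x)=p[\mc T](x)$. From this and $a_2\ge 0$ one only gets $a_1(x)\le 1$ on $(0,1]$, not $a_1(x)\le x$, so your sentence ``the only way $\|a_1\|_\infty=1$ is if $a_1(1)=1$'' loses its justification. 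The bound $a_1(x)\le x$ (for $|\mc T|\ge 2$) is nevertheless true and follows from the recursion you already set up: writing $\mc T_1$ for the child subtree containing $v_*$, one has $a_1(x)=x\,c_1(x)\prod_{i\ge 2}\bigl(1-p[\mc T_i](x)\bigr)$ with $0\le c_1(x)\le 1$ by induction and each remaining factor in $[0,1]$. This computation also supplies the nonnegativity of $a_1$ (and, with a similar line, of $a_2$), which you list as identity (a) but never verify and which is genuinely needed: the Lipschitz constant of $A$ is $\sup_x|a_1(x)|$, not $\sup_x a_1(x)$. With these repairs your endgame is correct and complete: $a_1(x)\le x<1$ on $[0,1)$, compactness forces the supremum to be attained, so $\|a_1\|_\infty=1$ would require $a_1(1)=1$, and your induction at $x=1$ shows this happens iff every ancestor of $v_*$ has a single child, i.e.\ $\mc T$ is a path rooted at an end vertex.

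For comparison, the paper identifies $a_2(x)=p[\mc T_{\setminus y}](x)$ for the maximal faithful subtree avoiding $v_*$ and $a_1(x)=\frac1x\bigl(p[\mc T](x)-p[\mc T_{\setminus y}](x)\bigr)$ via Proposition~\ref{prop:coeffs}, then reads off $a_1(x)$ as the conditional percolation probability that the only surviving root-to-leaf path is the one to $v_*$, given that $v_*$ survives; this yields $0\le a_1(x)<1$ pointwise in one stroke unless $v_*$ is the unique leaf, and continuity on the compact interval finishes. Your recursion-at-$x=1$ analysis reaches the same conclusion more computationally and without the probabilistic interpretation.
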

\begin{proof}
 From the construction of the polynomial, $p[\mc T](x,x)=p[\mc T](x)$. Consider the maximal faithful subtree of $\mc T$ not containing the leaf $v_*$ to which $y$ was assigned, and denote it by $\mc T_{\setminus y}$. It follows from Proposition \ref{prop:coeffs} that $p[\mc T_{\setminus y}](x)$ is the part of $p[\mc T](x)$ that accounts for all those $y$-avoiding subtrees\footnote{We set $p[\emptyset](x)=0$ for the empty tree, to cover the case where $\mc T$ only had one leaf to begin with.}, thus $a_2(x)=p[\mc T_{\setminus y}](x)$. Consequently, $p[\mc T]-p[\mc T_{\setminus y}]$ is the polynomial corresponding to all faithful subtrees containing the $y$-leaf. However, we still need to exchange one $x$-variable by a $y$-variable, thus $a_1(x)=\frac{1}{x}\left(p[\mc T](x)-p[\mc T_{\setminus y}](x)\right)$. Observe that this is indeed a polynomial, as every one-variate tree polynomial is divisible by $x$.
 
 To show that the operator $A$ is a contraction, it suffices to verify that $\|a_1\|_{C[0,1]} <1$, since 
 \begin{equation}\label{eq:opnorm}
  \|Af-Ag\|_{C[0,1]} 
  = \|a_1(f-g)\|_{C[0,1]} 
  \leq \|a_1\|_{C[0,1]}\cdot\|f-g\|_{C[0,1]},
 \end{equation}
 as desired. 
 To this end, recall that by Proposition~\ref{prop:treepol}, the polynomial $p[\mc T_{\setminus y}](x)$ can be interpreted as the probability that $\mc T_{\setminus y}$ contains any path from the root to a leaf of $\mc T$ different from $v_*$, when nodes are deleted independently with probability $1-x\in[0,1]$, and kept otherwise. Since the analogous statement is also true for $p[\mc T](x)$, this gives the following probabilistic interpretation of $a_1(x)=\frac{1}{x}\left(p[\mc T](x)-p[\mc T_{\setminus y}](x)\right)$: The event that the only path from the root to a leaf in $\mc T$ is the path to $v_*$, conditioned on the node $v_*$ being present, has probability $a_1(x)$. From this, it follows immediately that $0\leq a_1(x) < 1$ for $x\in[0,1]$ unless $v_*$ is the unique leaf of $\mc T$. The inequality $\|a_1\|_{C[0,1]}<1$ now follows from the fact that $a_1$ is a polynomial and therefore continuous. 
\end{proof}

Let $(\msc T_n)_{n\in\IN}$ be a family of finite sets of rooted trees, say \[
 \msc T_n = \left\{\mc T_{n,1},\dots,\mc T_{n,k_n} \right\}
\]
for integers $k_1,k_2,\dots$. Given any rooted tree $\mc T$ and any positive integer $n$, we denote by $M_n(\mc T)$ the rooted tree which, upon removal of its root node, decomposes into one copy of $\mc T, \mc T_{n,1},\dots,\mc T_{n,k_n}$ each, with the roots of these trees being the children of the root in $M_n(\mc T)$. We can think of $M_n(\,\cdot\,)$ for fixed $n$ as being a map from the set of rooted trees to itself. Accordingly, those maps can be composed, and we use the shorthand notation 
\[
 M_{n_\ell,\dots,n_1}(\,\cdot\,)=M_{n_\ell}\circ \dots\circ M_{n_1}(\,\cdot\,)
\] 
for any finite sequence $n_1,\dots,n_\ell$ of positive integers.

\begin{thm}\label{thm:fringe}
 Let $\bullet$ be the rooted tree consisting of one vertex and let $\mc T_*$ be a fixed rooted tree.
 Consider a family $(\msc T_n)_{n\in\IN}$ of finite sets of rooted trees, as above. If there is a constant $c<1$ such that 
 \begin{equation}\label{eq:fringecond}
  \Bigg\|x\prod_{j=1}^{k_n} \big(1-p[\mc T_{n,j}](x)\big)\Bigg\|_{C[0,1]}\leq c \ \text{ for all $n\in\IN$,}
 \end{equation}
 then 
 \[
  \big\| p\left[M_{n,\dots,1}(\mc T_*)\right](x)- 
  p\left[M_{n,\dots,1}(\bullet)\right](x) \big\|_{C[0,1]} \to 0 
 \]
 as $n\to\infty$. 
\end{thm}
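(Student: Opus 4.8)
The plan is to unwind the definition of $M_{n,\dots,1}$ one layer at a time and track how the difference of the two tree polynomials propagates through each layer using the bivariate polynomial and its contraction property from Lemma~\ref{lem:bivariate}. Set $\mc S_N^* := M_{N,\dots,1}(\mc T_*)$ and $\mc S_N^\bullet := M_{N,\dots,1}(\bullet)$, and write $f_N := p[\mc S_N^*]$, $g_N := p[\mc S_N^\bullet]$; the goal is $\|f_N - g_N\|_{C[0,1]} \to 0$. The base case is $\|f_0 - g_0\|_{C[0,1]} = \|p[\mc T_*] - p[\bullet]\|_{C[0,1]} = \|p[\mc T_*] - x\|_{C[0,1]} \leq 1$ (both polynomials map $[0,1]$ to $[0,1]$). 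The key point is that $\mc S_N^*$ and $\mc S_N^\bullet$ differ \emph{only} in the fringe subtree sitting at the end of the first chain of $M$-maps — namely one has $\mc T_*$ there and the other has $\bullet$ — while the remaining branches $\mc T_{n,j}$ attached at each level are identical.

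First I would make precise the recursion for $f_N$ in terms of $f_{N-1}$. Applying $M_N$ to a tree $\mc T$ creates a new root whose children are the roots of $\mc T, \mc T_{N,1}, \dots, \mc T_{N,k_N}$, so by the recursion \eqref{eq:treerec},
\[
 p[M_N(\mc T)](x) = x\Bigl(1 - (1 - p[\mc T](x))\prod_{j=1}^{k_N}(1 - p[\mc T_{N,j}](x))\Bigr).
\]
Writing $c_N(x) := x\prod_{j=1}^{k_N}(1 - p[\mc T_{N,j}](x))$, this reads $p[M_N(\mc T)](x) = x - c_N(x) + c_N(x)\,p[\mc T](x)$, which is precisely an affine map of $p[\mc T]$ of the form handled by Lemma~\ref{lem:bivariate}: it is the operator $A_N\colon h \mapsto x - c_N(x) + c_N(x)h(x)$ with linear coefficient $a_1 = c_N$ and $\|c_N\|_{C[0,1]} \leq c < 1$ by the hypothesis \eqref{eq:fringecond}. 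Therefore
\[
 \|f_N - g_N\|_{C[0,1]} = \|A_N f_{N-1} - A_N g_{N-1}\|_{C[0,1]} = \|c_N\cdot(f_{N-1}-g_{N-1})\|_{C[0,1]} \leq c\,\|f_{N-1}-g_{N-1}\|_{C[0,1]},
\]
exactly as in \eqref{eq:opnorm}. Iterating from the base case gives $\|f_N - g_N\|_{C[0,1]} \leq c^N \to 0$, which is the claim (and in fact gives the stronger geometric rate).

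The one subtlety to pin down carefully — and the main obstacle, though a mild one — is the bookkeeping that each $M_N$ acts on both $\mc S_{N-1}^*$ and $\mc S_{N-1}^\bullet$ through the \emph{same} affine operator $A_N$, i.e. that the auxiliary subtrees $\mc T_{N,1},\dots,\mc T_{N,k_N}$ and the contribution $x - c_N(x)$ of the new root are identical in both cases; this is immediate from the definition of $M_N$ but should be stated. One should also note that Lemma~\ref{lem:bivariate}'s contraction statement was phrased for the bivariate polynomial of a single tree $\mc T$ with a distinguished leaf; here the role of the distinguished leaf is played by the root of the pending copy of $\mc T$ (resp.\ $\bullet$), and the coefficient $a_1$ in that lemma is exactly $\frac{1}{x}(p[\mc T] - p[\mc T_{\setminus y}]) = \prod_{j}(1-p[\mc T_{N,j}])$ times $x$ — matching $c_N$ — so the hypothesis \eqref{eq:fringecond} is precisely the uniform bound $\|a_1\|_{C[0,1]}\leq c<1$ needed to make the composition of the $A_N$ a uniform contraction. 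Once this identification is made explicit, the argument is just the telescoping estimate above.
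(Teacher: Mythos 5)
Your proof is correct and takes essentially the same approach as the paper's: both unwind one application of $M_N$ at a time via \eqref{eq:treerec}, identify the resulting affine operator $h\mapsto x-c_N(x)+c_N(x)h$ with the contraction of Lemma~\ref{lem:bivariate}, and iterate the estimate \eqref{eq:opnorm} to get the geometric bound $c^n\|p[\mc T_*]-p[\bullet]\|_{C[0,1]}\to 0$. The only cosmetic difference is that you verify the contraction directly from the explicit form of $c_N$ rather than routing the identification through the bivariate polynomial $p_\bullet[M_m(\bullet)](x,y)$, which is harmless.
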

\begin{proof}
 Let $m\in \IN$, and let $\mc T$ be any rooted tree. By virtue of \eqref{eq:treerec}, we have 
 \[
  p[M_m(\mc T)](x)
  =x-x\prod_{j=1}^{k_m}\big(1-p[\mc T_{m,j}](x)\big)
   + xp[\mc T](x) \prod_{j=1}^{k_m}\big(1-p[\mc T_{m,j}](x)\big).
 \]
 Simultaneously, fixing the leaf $\bullet$ in $M_k(\bullet)$ and assigning to it the variable $y$, we obtain the bivariate polynomial 
 \[
  p_{\bullet}[M_m(\bullet)](x,y)
  =x-x\prod_{j=1}^{k_m}\big(1-p[\mc T_{m,j}](x)\big) + xy\prod_{j=1}^{k_m}\big(1-p[\mc T_{m,j}](x)\big).
 \]
 In particular,
 \[
  p[M_m(\mc T)](x)=p_\bullet[M_m(\bullet)](x,p[\mc T](x)).
 \]
 Hence the operator $A_m:p[\mc T](x)\mapsto p[M_m(\mc T)](x)$ has the form described in Lemma~\ref{lem:bivariate} and is therefore a strict contraction with an operator norm bounded by 
 \[
  \Bigg\|x\prod_{j=1}^{k_m} \big(1-p[\mc T_{m,j}](x)\big)\Bigg\|_{C[0,1]},
 \]
 according to the estimate~\eqref{eq:opnorm}. Therefore, iterating \eqref{eq:opnorm} yields
 \begin{align*}
  &\big\| p\left[M_{n,\dots,1}(\mc T_*)\right](x)- p\left[M_{n,\dots,1}(\bullet)\right](x) \big\|_{C[0,1]}\\
  &\qquad\leq \prod_{i=1}^n \Bigg\|x\prod_{j=1}^{k_i} \big(1-p[\mc T_{i,j}](x)\big)\Bigg\|_{C[0,1]} \cdot\big\|p[\mc T_*](x)-p[\bullet](x)\big\|_{C[0,1]}\\
  &\qquad\leq c^n \big\|p[\mc T_*](x)-p[\bullet](x)\big\|_{C[0,1]} \to 0
 \end{align*}
 as $n\to\infty$. 
\end{proof}

The general message of Theorem~\ref{thm:fringe} is thus that two trees that only differ in a fringe tree which is rooted far away from the root node will have approximately the same polynomial function $p[\,\cdot\,]$ over $[0,1]$, provided condition~\eqref{eq:fringecond} holds. We will apply this idea in Proposition~\ref{prop:binary} below.

\section{Examples: Caterpillar trees, stars, and complete binary trees}

\begin{figure}[!h]
 \includegraphics[scale=0.65]{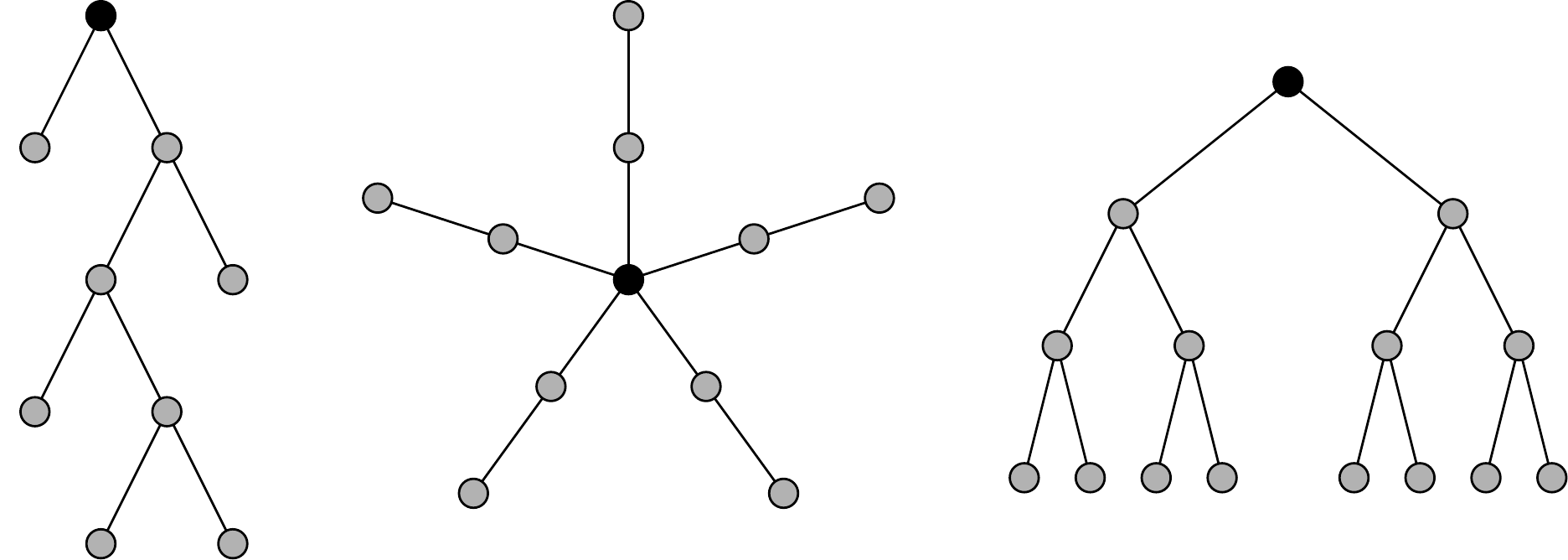}
 \caption{The trees $\cater_4$, $\Star_{3,2}$ and $\binary_4$ from left to right, with the root shown in black.}
\end{figure}

\begin{ex}\label{ex:caterpillar}
 Let $\cater_n$ denote the binary caterpillar tree on $2n+1$ nodes, that is, a rooted tree where every node is either a leaf, or has at least one child which is a leaf. The associated polynomials then satisfy the recurrence relation 
 \begin{align*}
  p[\cater_{n+1}](x)
  &=x(1-(1-x)(1-p[\cater_n](x)))\\
  &=x^2+xp[\cater_n](x)-x^2p[\cater_n](x)
 \end{align*}
 with $p[\cater_0](x)=x$. By the Banach fixed point theorem in $C[0,1]$, this recurrence converges against the unique solution of the associated fixed point equation $f=x^2+xf-x^2f$, which is $f(x)=\frac{x^2}{x^2-x+1}$. Finally, since all functions involved are bounded on the interval $[0,1]$, we obtain 
 \[
  \bP[\mf S(\cater_n)=\mf C(\cater_n)] \longrightarrow \int_0^1 \frac{x}{1-x+x^2}\,\Di x = \frac{\pi}{3\sqrt{3}} \approx 0.6046
 \]
 as $n\to \infty$ by applying the dominated convergence theorem to Corollary \ref{cor:rootsep}. 
 
 Moreover, we can use these very same tools to compute the limiting distribution of $|\cater_{n,\mf S}|$. Indeed, every admissible graph must be a path of some length, say $m\geq0$, which then has $m$ leaves as boundary plus one further node that induces a subtree isomorphic to $\cater_{n-m}$. Hence by Corollary \ref{cor:rootsep}, we have for all $m\geq0$ as $n\to\infty$:
 \begin{align}
  \bP[|\cater_{n,\mf S}|=m] 
  &=\int_0^1 x^{m-1}(1-x)^m (mx+p[\cater_{n-m}](x))\Di x\notag \\
  &\longrightarrow m \Beta(m+1,m+1) + \int_0^1 x^m(1-x)^m \frac{x}{1-x+x^2} \Di x.\label{eq:caterlim1} 
 \end{align}
 For evaluating this limit, we note first that
 \begin{align*}
  \frac{x^{m+1}(1-x)^m}{x^2-x+1}
  &=\frac{x}{x^2-x+1}-\sum_{j=0}^{m-1} x^{j+1}(1-x)^j
 \end{align*}
 so that the integral in \eqref{eq:caterlim1} equals
 \begin{align*}
  \int_0^1 \frac{x^{m+1}(1-x)^m}{1-x+x^2} \Di x 
  &=\int_0^1 \frac{x}{x^2-x+1} \Di x - \sum_{j=0}^{m-1} \Beta(j+2,j+1) \\
  &= \frac{\pi}{3\sqrt{3}} - \sum_{j=0}^{m-1} \frac{1}{j+1} \binom{2j+2}{j+1}^{-1}.
 \end{align*}
 Using these results, we obtain for the limit in \eqref{eq:caterlim1}:
 \begin{align}\label{eq:caterlim}
  \lim_{n\to\infty} \bP[|\cater_{n,\mf S}|=m]
  &= \int_0^1 \left(m(x-x^2)^m + \frac{x(x-x^2)^m}{1-x+x^2}\right) \Di x\\
  &= \frac{\pi}{3\sqrt{3}} + \frac{m}{(m+1)\binom{2m+1}{m}} - \sum_{j=0}^{m-1} \frac{1}{(j+1)\binom{2j+2}{j+1}}.
 \end{align}
 These limiting expressions form indeed a probability distribution on the non-negative integers, as can be comfortably shown with Theorem \ref{thm:tight}: In the notation of this theorem, we have $|\partial_S A|=m+1$ for all admissible graphs $A$ on $m$ vertices, thus $b=1$, and the radius of convergence of 
\[
 \sum_{m=0}^\infty a_mx^m=\sum_{m=0}^\infty (m+1)x^m = \frac{1}{(1-x)^2}
\]
 equals $1>1/4$. Hence, the random variables $|\cater_{n,\mf S}|$ form a tight, vaguely converging sequence, which must thus also converge in distribution. 
\end{ex}

\begin{rem}\label{rem:catermom}
 It is also possible to directly verify that \eqref{eq:caterlim} defines a random variable. 
 To see this, observe that the integrand in \eqref{eq:caterlim} is non-negative. We thus obtain from the monotone convergence theorem that
 \begin{align}\label{eq:cater1}
  \sum_{m=0}^\infty \lim_{n\to\infty} &\bP[|\cater_{n,\mf S}|=m]\notag \\
  &= \int_0^1 \left( \sum_{m=0}^\infty m(x-x^2)^m + \frac{x}{1-x+x^2} \sum_{m=0}^\infty (x-x^2)^m \right) \Di x\notag \\
  &= \int_0^1 \frac{2x-x^2}{(1-x+x^2)^2} \Di x =1,
 \end{align}
 as required.
 
 Furthermore, the estimate 
 \[
  m(x-x^2)^m + \frac{x(x-x^2)^m}{1-x+x^2} \leq \frac{m+1}{4^m}, \quad x\in[0,1],
 \]
 implies that a random variable $X$ having the probability distribution defined by \eqref{eq:caterlim} has exponential tails. In particular, all its moments exist, and they can be computed in a similar fashion to \eqref{eq:cater1}. Omitting the details, we obtain 
 \begin{align*}
  \bE[X] &= \frac{1}{54}(9+4\sqrt{3}\pi) \approx 0.5697\\
  \Var[X] &= \frac{1}{972}(783 + 16\sqrt{3} \pi - 16 \pi^2) \approx 0.7327.
 \end{align*}

\end{rem}

\begin{ex}\label{ex:star}
 Let $\Star_{r,n}$ denote the rooted tree on $rn+1$ vertices which, upon removal of the root node, splits into $r$ copies of a path on $n$ vertices. Hence, the tree polynomial of $\Star_{r,n}$ is given by 
 \begin{equation}\label{eq:starpol}
  p[\Star_{r,n}](x)=x(1-(1-x^n)^r)
 \end{equation}
 and the probability of separating the tree by cutting the root node is therefore given by 
 \begin{multline}\label{eq:starprob}
  \bP[\mf S(\Star_{r,n})=\mf C(\Star_{r,n})] 
  = \int_0^1 1-(1-x^n)^r\Di x\\
  = 1-\frac{1}{n}\int_0^1 y^{\frac{1}{n}-1}(1-y)^r \Di y 
  = 1-\frac{1}{n}\Beta\left(\frac{1}{n},r+1\right)
 \end{multline}
 after substituting $y=x^n$, according to Corollary~\ref{cor:rootsep}. Letting the size of the tree grow to infinity, we obtain different asymptotic behaviours of this probability, depending on the relation between $r$ and $n$. To see this, let $\rho\in(0,1)$ and set $r=r(n)=(1-\rho)^{-n}-1$. In the interest of brevity, write $z:=(1-\rho)^{-n}=r(n)+1$. Then, using Stirling's approximation for the Gamma function, we obtain
 \begin{multline*}
  \frac{1}{n}\Beta\left(\frac{1}{n},r(n)+1\right) 
  = \gC\left(1+\frac{1}{n}\right)
    \frac{\gC(z)}{\gC\left(z-\frac{\ln(1-\rho)}{\ln z}\right)}\\
  \sim \sqrt{\frac{z-\frac{\ln(1-\rho)}{\ln z}}{z}} \cdot 
  \frac{e^{z-\frac{\ln(1-\rho)}{\ln z}}}{e^z} \cdot
  \frac{z^z}{\left(z-\frac{\ln(1-\rho)}{\ln z}\right)^{z-\frac{\ln(1-\rho)}{\ln z}}} 
  \longrightarrow 1-\rho
 \end{multline*}
 as $n$ (and therefore $z$) tends to infinity. Hence, in light of \eqref{eq:starprob}, and by employing monotonicity properties w.r.t. $r$ and $n$, we get the following results:
 \begin{enumerate}[(a)]
  \item If the number $r$ of rays grows subexponentially with $n$ then we have $\bP[\mf S(\Star_{r,n})=\mf C(\Star_{r,n})] \to 0$. In particular, this covers the intuitively obvious case when the number or rays is bounded. 
  \item If $r\sim (1-\rho)^{-n}$ then by the above $\bP[\mf S(\Star_{r,n})=\mf C(\Star_{r,n})] \to \rho$.
  \item If $r$ grows superexponentially with $n$ (or, if $n$ is even bounded) then $\bP[\mf S(\Star_{r,n})=\mf C(\Star_{r,n})] \to 1$.
 \end{enumerate}
\end{ex}

\begin{ex}\label{ex:binary}
 Denote by $\binary_n$ the full complete rooted binary tree on $2^n-1$ vertices, this being the rooted binary tree having $2^h$ vertices at every height $h=0,...,n-1$. Observe that thus $\binary_1$ is the tree consisting of only the root node, and that $\binary_{n+1}$ splits into two copies of $\binary_n$ upon removing the root node. Thus, the associated polynomials satisfy the recurrence equation 
 \begin{align*}
  p[\binary_{n+1}](x) &= x(1-(1-p[\binary_n](x))^2)\\
    &= 2xp[\binary_n](x) - x p[\binary_n](x)^2
 \end{align*}
 with $p[\binary_1](x)=x$. The fixed-point equation $f=2xf-xf^2$ admits two solutions, $f\equiv 0$ and $f(x)=2-\frac{1}{x}$. For convenience, let $\gp(x):= \max\left\{0,2-\frac{1}{x}\right\}$. Direct verification reveals that if a function $g$ satisfies $g(x) \geq \gp(x)$ for all $x$, then $2xg(x)-xg(x)^2\geq\gp(x)$ as well. Furthermore, the sequence of polynomials $p[\binary_n]$ decreases monotonically pointwise by Lemma~\ref{lem:treemon}, and hence converges pointwise against $\gp(x)$. 
 
 In light of Proposition \ref{prop:perc}, this result should not be surprising: The sequence of rooted trees $\binary_n$ satisfies all the conditions, and the function $\gp(x)$ indeed equals the probability that the root node is contained in an infinite cluster of $\Ber(x)$-site percolation, as can be verified independently.\footnote{For example, in \cite{grimmett1999percolation}, p. 256, a similar recursive argument is used to determine the probability of the root being contained in an infinite cluster of $\Ber(p)$-bond percolation to be $\gp(p)/p$. While not the same, there is a bijection between edges and non-root vertices in a rooted tree by mapping any edge to the endpoint further away from the root. This allows us to translate between site and bond percolation and explains Grimmett's additional factor of $p^{-1}$.} 
 
 Continuing with our analysis, the probability of the remaining tree at separation being empty now follows handily from Corollary~\ref{cor:rootsep}: 
 \[
  \lim_{n\to\infty}\bP[|\binary_{n,\mf S}|=0] = \int_0^1 \frac{\gp(x)}{x}\Di x 
  = \ln 4 - 1\approx 0.3863
 \]
 
 In a similar fashion, we can continue to determine the limiting probability of separation graphs of any size $m\geq 0$: Since there are $C_m = \frac{1}{m+1}\binom{2m}{m}$-many\footnote{These are, of course, the Catalan numbers, \cite[A000108]{OEIS}} subtrees of the infinite rooted binary tree on $m$ vertices, and each of those has $m+1$ boundary vertices, we claim that 
 \begin{equation}\label{eq:binarylimit1}
  \lim_{n\to\infty}\bP[|\binary_{n,\mf S}|=m] = \binom{2m}{m}\int_0^1 x^{m-1}(1-x)^m \gp(x) \Di x.
 \end{equation}
 This can be derived in a manner analogous to equation \eqref{eq:caterlim1}, where the change between limit and integral can be motivated by recalling the estimate $p[\binary_n](x)\leq x$, so that the integrand is bounded by $x^m(1-x)^m$. Hence, by the dominated convergence theorem, we arrive at \eqref{eq:binarylimit1}.
 
 Observe that, in the notation of Theorem \ref{thm:tight}, the sequence $a_m=\binom{2m}{m}$ has generating function $\frac{1}{\sqrt{1-4x}}$ (cf. \cite[A000984]{OEIS}), thus violating the integrability condition \eqref{eq:integrable} for the proper constant $b=1$. However, we can check by hand that \eqref{eq:binarylimit1} defines a probability distribution with the following computation, again relying on the generating function of $\binom{2m}{m}$:
 \begin{align}\label{eq:binarytight}
  \sum_{m=0}^{\infty} \binom{2m}{m} &\int_0^1 x^{m-1}(1-x)^m \gp(x)\Di x\notag\\
  &= \int_{1/2}^1 \left(\sum_{m=0}^\infty \binom{2m}{m}(x-x^2)^m\right) \cdot \frac{2x-1}{x^2}\Di x\notag\\
  &= \int_{1/2}^1 \frac{1}{\sqrt{1-4x+4x^2}}\cdot \frac{2x-1}{x^2}\Di x\notag\\
  &= \int_{1/2}^1 \frac{1}{x^2} \Di x = 1.
 \end{align}
 Note also that a random variable $X$ having the probability distribution defined by \eqref{eq:binarylimit1} does not have a finite first moment: Imitating the approach of \eqref{eq:binarytight} leads to 
 \[
  \bE[X] = \sum_{m=0}^\infty m\binom{2m}{m} \int_0^1 x^{m-1}(1-x)^m\gp(x)\Di x
  = \int_{1/2}^1 \frac{2(1-x)}{x(2x-1)^2}\Di x
 \]
 where the integral on the right-hand side diverges. 
\end{ex}

Finally, we can use Lemmas \ref{lem:treemon} and \ref{lem:bivariate} to verify that the same limiting distribution also holds if we consider the sequence of complete binary trees on $n$ vertices (of which the full complete binary trees are merely a subsequence):  
 
Denote by $\mc T_n$ the complete binary tree on $n$ vertices, this being the binary tree having $2^k$ vertices at height $k$ for $0\leq k< \lfloor \lg n\rfloor =: m$, with the remaining $n-2^m+1$ vertices at height $m$ in their left-most positions.

\begin{prop}\label{prop:binary}
 With $\gp(x)=\max\left\{0,2-\frac{1}{x}\right\}$ as above, we have $p[\mc T_n](x) \to \gp(x)$ uniformly over $[0,1]$, as $n\to\infty$. 
 
 Consequently, the limiting distribution of $\left|\mc T_{n,\mf S}\right|$ coincides with that of $\left|\binary_{n,\mf S}\right|$ and is given by equation~\eqref{eq:binarylimit1}.
\end{prop}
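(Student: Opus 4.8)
The plan is to pin down $p[\mc T_n]$ uniformly on $[0,1]$ between two sequences of polynomials that both converge uniformly to $\gp$, and then feed the resulting uniform convergence into the argument of Example~\ref{ex:binary} to read off the limiting law of $\left|\mc T_{n,\mf S}\right|$. Throughout write $m=\lfloor\lg n\rfloor$, so that $\mc T_n$ is the full binary tree down to height $m-1$ together with $n-2^m+1$ further vertices, left-most, at height $m$; note $m\to\infty$.

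The upper bound is immediate: the full binary tree $\binary_m$ (heights $0,\dots,m-1$) is a \emph{trimmed} subtree of $\mc T_n$, since every non-leaf of $\binary_m$ has height $\le m-2$ and therefore keeps both of its children in $\mc T_n$. By Lemma~\ref{lem:treemon}(ii), $p[\mc T_n]\le p[\binary_m]$ on $[0,1]$, and since $p[\binary_m]\downarrow\gp$ pointwise on the compact interval $[0,1]$ (Example~\ref{ex:binary}) with $\gp$ continuous, Dini's theorem upgrades this to uniform convergence $p[\binary_m]\to\gp$; in particular $p[\mc T_n]\le\gp+o(1)$ uniformly.

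For the lower bound -- the heart of the matter -- let $\mc T_n''$ be obtained from $\mc T_n$ by gluing a copy of $\binary_2$ below each leaf of $\mc T_n$ sitting at height $m-1$, so that all leaves of $\mc T_n''$ lie at height $m$. Since $\mc T_n$ was only modified at its leaves, it is a trimmed subtree of $\mc T_n''$, hence $p[\mc T_n]\ge p[\mc T_n'']$ by Lemma~\ref{lem:treemon}(ii). A short count -- using that left-most filling forces at most one height-$(m-1)$ vertex of $\mc T_n$ to have exactly one child -- shows that $\mc T_n''$ is either $\binary_{m+1}$ (whence $p[\mc T_n'']\ge\gp$ and we are done) or $\binary_{m+1}$ with a single leaf $\ell$ removed. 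In the latter case I would compare $\mc T_n''=\binary_{m+1}\setminus\{\ell\}$ with $\binary_{m+1}$ via the contraction of Lemma~\ref{lem:bivariate}: both trees arise from the fringe rooted at the parent $w$ of $\ell$ (a path on two vertices, resp.\ a copy of $\binary_2$) by the \emph{same} sequence of $m-1$ fringe-building operations whose successive extra subtrees are $\binary_2,\binary_3,\dots,\binary_m$, exactly as in the proof of Theorem~\ref{thm:fringe}. Composing the operator-norm bound~\eqref{eq:opnorm} $m-1$ times gives
\[
 \big\|p[\binary_{m+1}]-p[\mc T_n'']\big\|_{C[0,1]}
 \le \Big(\prod_{j=2}^{m}\big\|x(1-p[\binary_j](x))\big\|_{C[0,1]}\Big)\cdot\big\|x^2-(2x^2-x^3)\big\|_{C[0,1]}
 \le \tfrac{4}{27}\,2^{-(m-1)},
\]
where each contraction factor is $\le\max_{x\in[0,1]}x(1-\gp(x))=\tfrac12$ because $p[\binary_j]\ge\gp$ and $x(1-\gp(x))$ equals $x$ on $[0,\tfrac12]$ and $1-x$ on $[\tfrac12,1]$, while the last norm is $\max_x x^2(1-x)=\tfrac4{27}$. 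Hence $p[\mc T_n]\ge p[\mc T_n'']\ge\gp-\tfrac4{27}2^{-(m-1)}$ uniformly, and combining with the upper bound yields $p[\mc T_n]\to\gp$ uniformly on $[0,1]$. I expect this lower bound to be the only real obstacle: $\mc T_n$ is genuinely \emph{not} squeezed between $\binary_m$ and $\binary_{m+1}$ (one checks e.g.\ that $p[\mc T_6]$ dips below $\gp$), so the detour through $\mc T_n''$ together with the quantitative contraction estimate -- in particular the bound $\tfrac12$ on the factors, which just survives multiplication against the $2^{m-1}$ loss -- seems unavoidable.

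For the consequence, I would run the computation of Example~\ref{ex:binary} with $\mc T_n$ in place of $\binary_n$. By~\eqref{eq:tsep}, $\bP[|\mc T_{n,\mf S}|=k]=\sum_{\mc T_*\in\msc A_k(\mc T_n)}\sum_{v_*\in\partial\mc T_*}\int_0^1 u^{k-1}(1-u)^{|\partial\mc T_*|-1}p[v_*](u)\Di u$, with $p[v_*]=p[H[v_*]]$ the tree polynomial of the fringe of $\mc T_n$ rooted at $v_*$ (Proposition~\ref{prop:treepol}). For $n$ large enough that $\mc T_n$ is full down to height $k$, the admissible $\mc T_*$ on $k$ vertices are precisely the $C_k=\tfrac1{k+1}\binom{2k}{k}$ subtrees on $k$ vertices of the infinite binary tree containing the root, each with $|\partial\mc T_*|=k+1$; moreover each fringe $H[v_*]$ is itself a complete binary tree (left-most filling passes to fringes) with $|H[v_*]|\to\infty$, so $p[H[v_*]]\to\gp$ uniformly by the first part. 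Since $u^{k-1}(1-u)^{k}$ dominates all integrands, dominated convergence gives $\bP[|\mc T_{n,\mf S}|=k]\to C_k(k+1)\int_0^1 u^{k-1}(1-u)^{k}\gp(u)\Di u=\binom{2k}{k}\int_0^1 u^{k-1}(1-u)^{k}\gp(u)\Di u$, which is exactly~\eqref{eq:binarylimit1}.
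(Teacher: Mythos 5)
Your proof is correct and takes essentially the same route as the paper's: trimmed-subtree monotonicity (Lemma~\ref{lem:treemon}) for the easy comparisons with full complete binary trees, and the fringe-contraction estimate of Lemma~\ref{lem:bivariate}/Theorem~\ref{thm:fringe} with factor $\tfrac12$ along the path from the root to the defective leaf to absorb the one-missing-leaf discrepancy, followed by the same passage to the limiting distribution as in Example~\ref{ex:binary}. The only difference is organisational: you route the argument through the auxiliary tree $\mc T_n''$ (obtaining an explicit rate), whereas the paper splits into odd and even $n$ and compares $\mc T_n$ with $\mc T_{n-1}$.
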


\begin{proof}
 If $n$ is odd, then the number of vertices at height $m+1$ is even, so we have $\binary_{m-1} \ssq \mc T_n\ssq \binary_m$, where in both inclusions, the smaller tree is a trimmed subtree of the larger according to Definition~\ref{defn:subtrees}. Accordingly, we obtain from Lemma~\ref{lem:treemon} that 
 \[
  p[\binary_{m-1}](x)\geq p[\mc T_n](x)\geq p[\binary_m](x)
 \]
 for all $x\in[0,1]$. Hence $p[\mc T_n](x)\to \gp(x)$ pointwise (and uniformly) for odd $n\to \infty$. 
 
 For even $n$, observe that $\mc T_n$ and $\mc T_{n-1}$ only differ in a fringe subtree of height 2. Indeed, let $\text{root},v_{m-1},\dots,v_1$ be the path from the root to the parent of the unique leaf $\ell$ without a sibling vertex in $\mc T_n$. Each of the vertices $v_i$ is also in $\mc T_{n-1}$, and in both trees, they have a unique sibling, $w_i$, for $1\leq i< m$ (see Figure~\ref{fig:binary}). Writing $\mc T_n^{(v)}$ for the fringe subtree of $\mc T_n$ rooted at a vertex $v$, and setting $\msc T_j:= \left\{\mc T_n^{(w_j)}\right\}$, we observe that condition~\eqref{eq:fringecond} is satisfied: Since every $\mc T_n^{(w_j)}$ is a non-empty full complete binary tree, we have 
 \[
  \gp(x)\leq p\left[\mc T_n^{(w_j)}\right](x)\leq x
 \]
 for $x\in[0,1]$ by the results of Example~\ref{ex:binary}. Accordingly, 
 \[
  \left\|x\left(1-p\left[\mc T_n^{(w_j)}\right](x)\right)\right\|_{C[0,1]}
  \leq \left\| x(1-\gp(x)) \right\|_{C[0,1]}=\frac{1}{2}
 \]
 for all $j$. Therefore, 
 \[
  \big\| p[\mc T_n]-p[\mc T_{n-1}]\big\|_{C[0,1]} \to 0
 \]
 as $m\to \infty$ for even $n$ by Theorem~\ref{thm:fringe}, yielding the desired convergence. (In fact, repeating the estimates from the proof of Theorem~\ref{thm:fringe} reveals the more concise estimate
 $\big\|p[\mc T_n]-p[\mc T_{n-1}]\big\|\leq 2^{1-m}$).
 
 \begin{figure}[h]
  \includegraphics[]{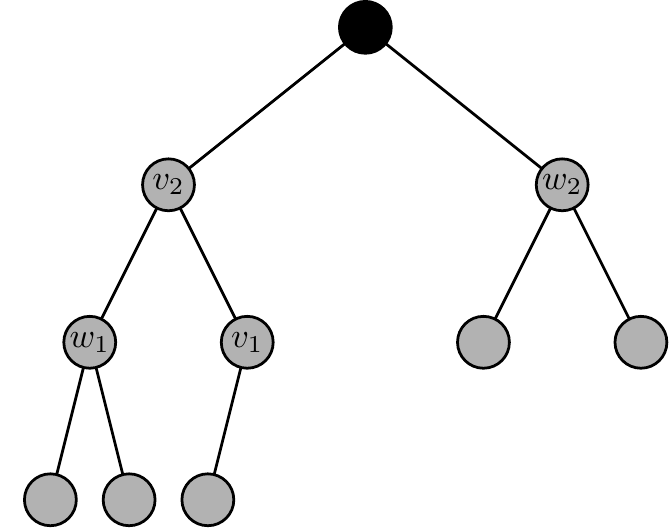}
  \caption{The vertices $v_i$ and $w_i$ from the proof of Proposition~\ref{prop:binary} in $\mc T_{10}$.}\label{fig:binary}
 \end{figure}

 The second claim concerning the limiting distribution of the size of the separation graph follows in the same way as when it was first derived in Example~\ref{ex:binary}.
\end{proof}

Observe that by this proposition and by \eqref{eq:binarytight}, the random variables $\left|\mc T_{n,\mf S}\right|$ converge in distribution and are therefore tight. By Remark~\ref{rem:tight} this means that the limit law obtained by Janson in \cite[Theorem~1.1]{janson2004random} for $\mf C(\mc T_n)$ holds also for $\mf S(\mc T_n)$. More explicitly, if we denote by $\{x\}:= x-\lfloor x\rfloor$ the fractional part of $x\in\IR$ then we obtain the following
\begin{cor}\label{cor:binary}
 Let $n\to\infty$ such that $\{\lg n - \lg\lg n\}\to \gc\in[0,1]$. Write $f(\gc):=2^\gc-1-\gc$. Let $W_{\gc}$ be a random variable with an infinitely divisible distribution with characteristic function 
 \[
  \bE\left[e^{itW_{\gc}}\right] 
  = \exp\left(if(\gc)t+\int_0^\infty \left(e^{itx}-1-itx\mathds{1}_{\{x<1\}}\right) \Di \nu_\gc(x)\right),
 \]
 with the Lévy measure $\nu_\gc$ being supported on $[0,\infty)$ and having density $\Di \nu_\gc = 2^{\{\lg x+\gc\}}x^{-2}\Di x$.
 Then
 \[
  \frac{\lg^2 n}{n}\left(\mf S(\mc T_n)-\frac{n}{\lg n}-\frac{n\lg\lg n}{\lg^2 n}\right) \dto - W_\gc
 \]
\end{cor}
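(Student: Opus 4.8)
The plan is to deduce Corollary~\ref{cor:binary} directly from Janson's limit law for the cutting number of complete binary trees together with the tightness of $\left|\mc T_{n,\mf S}\right|$, via the transfer principle spelled out in Remark~\ref{rem:tight}. Concretely, Janson's \cite[Theorem~1.1]{janson2004random} gives, under the same subsequential assumption $\{\lg n-\lg\lg n\}\to\gc$, a limit law of the form
\[
 \frac{\lg^2 n}{n}\left(\mf C(\mc T_n)-\frac{n}{\lg n}-\frac{n\lg\lg n}{\lg^2 n}\right) \dto -W_\gc,
\]
with $W_\gc$ the infinitely divisible random variable described in the statement (this is simply a restatement of Janson's result with the sign and centering constants matched to our normalisation; one should check that the centering sequences $\ga_n = \frac{n}{\lg n}+\frac{n\lg\lg n}{\lg^2 n}$ and $\gb_n = \frac{n}{\lg^2 n}$ agree with his up to $o(\gb_n)$, absorbing any discrepancy into the deterministic drift term $f(\gc)$). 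Here $\gb_n = n/\lg^2 n \to\infty$, and $W_\gc$ has positive variance since its Lévy measure $\nu_\gc$ is non-trivial.

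First I would invoke Proposition~\ref{prop:binary}, which shows $p[\mc T_n](x)\to\gp(x)$ uniformly on $[0,1]$ and identifies the limiting distribution of $\left|\mc T_{n,\mf S}\right|$ with the one in \eqref{eq:binarylimit1}; in particular $\left|\mc T_{n,\mf S}\right|$ converges in distribution, hence is a tight family of random variables. Second, I would quote the deterministic sandwich inequality
\[
 \mf S(\mc T_n) \leq \mf C(\mc T_n) \leq \mf S(\mc T_n) + \left|\mc T_{n,\mf S}\right|,
\]
valid since after separation the remaining graph $\mc T_{n,\mf S}$ is wiped out in at most $\left|\mc T_{n,\mf S}\right|$ further cuts. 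Dividing by $\gb_n$ and using tightness together with $\gb_n\to\infty$ gives $\left|\mc T_{n,\mf S}\right|/\gb_n\to 0$ in probability, so the two normalised quantities
\[
 \frac{\mf C(\mc T_n)-\ga_n}{\gb_n} \quad\text{and}\quad \frac{\mf S(\mc T_n)-\ga_n}{\gb_n}
\]
differ by a term going to $0$ in probability. By Slutsky's theorem the second one therefore has the same weak limit $-W_\gc$ as the first. This is exactly the mechanism of Remark~\ref{rem:tight}, applied with $X=-W_\gc$, $\ga_n = \frac{n}{\lg n}+\frac{n\lg\lg n}{\lg^2 n}$ and $\gb_n = \frac{n}{\lg^2 n}$.

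The main obstacle is not the probabilistic argument — which is essentially a citation of Remark~\ref{rem:tight} — but the bookkeeping required to align the present normalisation with the one used by Janson: one must verify that his centering and scaling constants for $\mf C(\mc T_n)$ coincide with $\ga_n,\gb_n$ above up to additive errors that are $o(\gb_n)$ and multiplicative errors that are $1+o(1)$, and to track how the fractional-part quantity $\{\lg n-\lg\lg n\}$ entering $W_\gc$ matches the density $2^{\{\lg x+\gc\}}x^{-2}$ of $\nu_\gc$ and the drift $f(\gc)=2^\gc-1-\gc$. Since Janson's theorem is stated along the subsequence $\{\lg n - \lg\lg n\}\to\gc$ already, this reconciliation is a routine (if slightly tedious) matching of constants, and once it is in place the corollary follows immediately.
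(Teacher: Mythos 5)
Your proposal is correct and follows exactly the route the paper takes: tightness of $\left|\mc T_{n,\mf S}\right|$ from Proposition~\ref{prop:binary} together with \eqref{eq:binarytight} (which confirms the limit is a proper distribution), then the sandwich $\mf S(\mc T_n)\leq \mf C(\mc T_n)\leq \mf S(\mc T_n)+\left|\mc T_{n,\mf S}\right|$ and Slutsky as packaged in Remark~\ref{rem:tight}, applied to Janson's limit law for $\mf C(\mc T_n)$. The paper's own justification is precisely this one-line invocation of Remark~\ref{rem:tight}, so nothing further is needed.
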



\section{Further questions}

We use this final section to present several, deliberately broad questions or remarks that could lead to interesting future research.

\subsection*{1} Determine the asymptotic distributions of $\mf S(G^{(n)})$ and $\left|G^{(n)}_{\mf S}\right|$ for other families of deterministic and random trees, in continuation to the wide variety of work done on the cutting number of trees. The author hopes to answer this for conditioned Galton-Watson trees in a follow-up paper. 

\subsection*{2} What happens if the roles of $S$ and $T$ are exchanged? This promises to be non-trivial already for rooted trees and their leaves. Moreover, for which graphs and which choices of $S,T$ are the random variables $\mf S(G;S,T)$ and $\mf S(G;T,S)$ equal in distribution?

\subsection*{3} How to evaluate the asymptotic distribution of $\mf S$ directly, without relying on previous knowledge of $\mf C$ as in Corollary~\ref{cor:binary}?

\subsection*{4} On trees, the difference between edge-cutting and vertex-cutting is usually negligible because there is a bijection between edges and non-root vertices, assigning the endpoint further away from the root to each edge. For general graphs, no such bijection exists. However, it is easy to see that the edge-cutting process on a graph $G$ is exactly the vertex-cutting process on the line graph of $G$. This therefore raises the question: How is the separation time on $G$ related to the separation time on the line graph of $G$?

\subsection*{5} For which sequences of graphs $G^{(n)}$ exhausting a locally finite infinite $G$ (with fixed sources and targets) are the random variables $\left|G^{(n)}_{\mf S}\right|$ \emph{not} tight? In this case, what can be said about the structure of the remaining graph?

\subsection*{6} By definition, the separation number is the number of cuts required to separate two subsets $S,T\ssq V$ from each other. Starting in a graph with a high connectivity (say, by having $k\geq 2$ vertex-disjoint paths from $S$ to $T$ in $G$) we can ask for the number of cuts required to reduce the connectivity to some $j\leq k$. The case $j=0$ specializes to the separation number as we defined it. However, since the notion of boundary we used to prove the results in Section 4 is not relevant anymore if $j>0$, the question is what kind of statements can be obtained for the more general case.\\

\subsection*{Acknowledgements} The author wishes to thank his academic advisors, Cecilia Holmgren and Svante Janson, for their generous support and many helpful remarks and discussions. The gratitude is extended to Jonas Sjöstrand, who pointed out a mistake in Corollary~\ref{cor:complete} and suggested a simplification for the formulas there. This work was partially supported by grants from Knut and Alice Wallenberg Foundation, the Ragnar Söderberg Foundation, and the Swedish Research Council.


\nocite{*}

\bibliographystyle{alphaurl}
\bibliography{sep_bib}
 
\end{document}